\documentclass{amsart}
\usepackage{amsmath,dsfont,amsfonts}
\usepackage{amssymb}
\usepackage{amsthm}
\usepackage{verbatim}

\newtheorem{teo}{Theorem}[section]
\newtheorem{prop}[teo]{Proposition}
\newtheorem{lm}[teo]{Lemma}

\newtheorem{rem}[teo]{Remark}
\newtheorem*{nota}{Notation}
\newcommand{\RR}{{\mathbb{R}}}
\newcommand{\CC}{{\mathbb C}}
\newcommand{\MM}{{\mathbb M}}
\newcommand{\ds}{\displaystyle}
\newcommand{\der}{\partial}
\newcommand{\ep}{\epsilon}

\newcommand{\om} {\Omega}
\newcommand{\omep}{{\omega_\ep}}

\newcommand{\omepn}{{\omega_{\ep_n}}}
\newcommand{\omepnj}{{\omega_{\ep_{n_j}}}}
\newcommand{\uep}{{u_\ep}}
\newcommand{\uepn}{{u_{\ep_n}}}
\newcommand{\vep}{{v_\ep}}
\newcommand{\vij}{{v^{ij}}}
\newcommand{\vepij}{{v_\ep^{ij}}}
\newcommand{\vhk}{{v^{hk}}}
\newcommand{\vephk}{{v_\ep^{hk}}}

\newcommand{\uepnj}{{u_{\ep_{n_j}}}}
\newcommand{\nabhat}{{\widehat{\nabla}}}
\newcommand{\omeprime}{{\omega^\prime_\ep}}
\newcommand{\sigz}{{\sigma_0}}

\newcommand{\nabuzero}{{\nabhat \uzero}}
\newcommand{\uepe}{{u^e_\ep}}
\newcommand{\uepi}{{u^i_\ep}}
\newcommand{\uzero}{{u_0}}

\def\qed{\hfill$\square$\vspace{0.5cm}}    

\numberwithin{equation}{section}



\def\ds{\displaystyle}
\def\d{{\delta}}

\parindent 0cm


\newcommand{\dive}{\text{\normalfont div}}


\begin{document}

\title[Anisotropic elastic inclusions]{
An asymptotic formula for the displacement field in the
presence of small anisotropic elastic inclusions}

\author[E.~Beretta et al.]{Elena~Beretta}
\address{ Dipartimento di Matematica ``G. Castelnuovo''
Universit\`a
di Roma ``La Sapienza'', Piazzale Aldo Moro 5, 00185 Roma, Italy}
\email{beretta@mat.uniroma1.it}
\author[]{Eric~Bonnetier}
\address{
Laboratoire Jean Kuntzmann, Universit\'e Joseph Fourier,
BP 53, 38041 Grenoble Cedex 9, France}
\email{Eric.Bonnetier@imag.fr}
\author[]{Elisa~Francini}
\address{Dipartimento di Matematica "U. Dini",
Viale Morgagni 67A, 50134 Firenze, Italy}
\email{francini@math.unifi.it)}
 \author[]{Anna~L~Mazzucato}
 \address{Mathematics Department, Penn State University, University Park, PA, 16802, U.S.A.}
\email{alm24@psu.edu}

\date{\today}

\keywords{Small volume asymptotics, inclusions, homogenization, anisotropic elasticity, elastic moment tensor}

\subjclass[2000]{35J57, 74B05 , 74Q05, 35C20}
\begin{abstract}
We derive asymptotic expansions for the displacement at the boundary of a smooth, elastic body in the presence of small inhomogeneities. Both the body and the inclusions are allowed to be anisotropic. This work extends prior work of  CapdeBoscq and Vogelius ({\em Math. Modelling Num. Anal.} 37, 2003) for the conductivity case.
In particular, we  obtain  an asymptotic expansion of the difference between the displacements at the boundary with and without inclusions, under Neumann boundary conditions, to first order in  the measure of the inclusions. We impose no geometric conditions on the inclusions, which need only be measurable sets. The first-order correction contains an elastic moment  tensor $\MM$ that encodes the effect of the inclusions.
In the case of thin, strip-like, planar inhomogeneities we obtain a formula for $\MM$ only in terms of the elasticity tensors, which we assume strongly convex, their inverses, and a frame on the curve that supports the inclusion. We prove uniqueness of $\MM$ in this setting and recover the formula previously obtained by Beretta and Francini ({\em SIAM J. Math. Anal.}, 38, 2006).
\end{abstract}

\maketitle

\section{Introduction}
Let $\om\subset \RR^d$, $d\geq 2$ be a smooth bounded domain representing the region occupied by an elastic body.
Let $\CC_0=\CC_0(x)$ be a smooth background  elasticity tensor in $\Omega$. Let $\omep\subset \Omega$ be a set of measurable small inhomogeneities and let $\CC_1=\CC_1(x)$ be the smooth elasticity tensor inside the inhomogeneities.  Let $\psi\in H^{-1/2}(\der\om)$ represents a traction on $\partial\Omega$ and $U$ the corresponding background displacement field which satisfies the system of linearized elasticity:
\[
    \left\{\begin{array}{rcl}
             \mbox{div}(\CC_0  \nabhat U)& = & 0\mbox{ in }\om \\
             (\CC_0\nabhat U)\nu & = & \psi\mbox{ on }\der\om,
           \end{array}\right.
\]
Let
\[
\CC_\ep=\CC_0\chi_{\om\setminus\omep}+\CC_1\chi_{\omep},
\]
and consider the perturbed displacement field  solution to
\[
    \left\{\begin{array}{rcl}
             \mbox{div}(\CC_\ep  \nabhat \uep)& = & 0\mbox{ in }\om \\
             (\CC_{\ep}\nabhat \uep)\nu & = & \psi\mbox{ on }\der\om.
           \end{array}\right.
\]
One goal of this paper is to obtain an asymptotic formula for $u_{\ep}-U$ on the boundary of $\Omega$ as the measure of $\omep$ approaches zero. The formula we derive generalizes those already available in case of homogeneous isotropic bodies with diametrically small  (see \cite{AKNT}, and, also \cite{AmmariKang}, \cite{AmmariKang2}) or for thin (see \cite{BerettaFrancini})  inhomogeneities. To derive the asymptotic expansion  we follow the approach introduced by Capdeboscq and Vogelius in \cite{CapdeboscqVogelius_genrep} (see also \cite{CapdeboscqVogelius_survey}) for the conductivity equation and we establish a formula in the case of arbitrary elastic tensors $\CC_0$ and $\CC_1$. More precisely  we show that for $y\in\partial\Omega$
\begin{equation}\label{general_rep}
    (\uepn-U)(y)=|\omepn|\int_\om \MM(x)\nabhat U(x):\nabhat N(x,y)d\mu_x+o(|\omepn|).
\end{equation}
along a sequence of configurations $\{\omepn\}$ whose measure tends to 0.
Here, $N$ is the Neumann function corresponding to the operator ${div}(\CC_0  \nabhat \cdot)$,  $\mu$ is a Radon measure, the elastic moment tensor $\MM \in L^2(\Omega, d\mu)$  and $\nabhat U$ represents the symmetric deformation tensor. \\
For particular geometries  like diametrically small or thin inhomogeneities the asymptotic expansion holds  for $(u_{\ep}-U)(y)$ as $\ep\rightarrow 0$ and one can characterize the measure $\mu$ and the tensor $\MM$. In particular,  if $\omep=z+\ep B$, where the center $z\in \Omega$ and $B$ is a bounded domain, then $\mu$ is a Dirac function concentrated at $z$.
If further both $\CC_0$ and $\CC_1$ are homogeneous and isotropic, the tensor $\MM$ can be explicitly computed and carries information about the geometry of $B$ and about the elastic parameters of $\CC_0$ and $\CC_1$ (\cite{AKNT}). \
If $\omep = \{ x \in \Omega, \quad \textrm{dist}(x, \sigma_0) < \ep \}$,
where $\sigma_0$ is a simple smooth open curve in the plane,
$\mu$ reduces to a Dirac measure supported on $\sigma_0$.
If again the phases are isotropic,
$\MM$ can be explicitly determined by the transmission conditions for $u_{\ep}$ \cite{BerettaFrancini}.

In the second part of the paper we analyze the case of thin inhomogeneities in a planar domain in the case  of arbitrary elasticity tensors.\\
In the case of isotropic homogeneous tensors, the idea,  used in \cite{BerettaFrancini} to derive the asymptotic expansion for $u_{\ep}-U$,  is  to apply  fine regularity results for solutions of elliptic systems with discontinuous coefficients by Y.Y. Li and L. Nirenberg~\cite{LiNirenberg} and to use the transmission conditions to derive the tensor $\MM$ which satisfies
\begin{equation} \label{jump_iso}
(\CC_1 - \CC_0) \nabhat \uepi(x ) = \MM(x) \nabhat \uepe(x),
\end{equation}
 whereas $\nabhat \uepi$ and $\nabhat \uepe$
denote the values of the deformation tensor inside and outside the inclusion
at a point $x$ on its boundary.
Note that the deformation tensors are related by transmission conditions
across $\der \omep$.

One of the difficulties we encountered in deriving the expansion in the anisotropic case is the the direct derivation of~(\ref{jump_iso}) from the transmission conditions.
To construct $\MM$, we follow the work of Francfort and Murat \cite{FrancfortMurat} on
the calculation of the effective properties of laminated 2-phase elastic composites.
Indeed, one can view moment tensors as limits of effective tensors as the volume fraction
of one of the phases tends to 0.

The paper is organized as follows. In section 2, we state a general representation formula
of the form~(\ref{general_rep}) for anisotropic elastic inhomogeneities embedded in
an anisotropic background medium.
The asymptotic expansion is proved in section 3.
Properties of the elastic moment tensor $\MM$ are established in section 4.
In section 5, $\omep$ is assumed to be a thin strip-like planar inclusion.
Firstly, relying on the uniform H\"older regularity of $\uep$ and on Meyer's theorem,
we give a direct derivation of the asymptotic expansion similar
to that in~\cite{BerettaFrancini}, under the assumption that there exists
a tensor $\MM$, independent of $\ep$, that satisfies~(\ref{jump_iso}).
Secondly, we prove existence of such $M$, invoking the result of Francfort and Murat mentioned above
~\cite{FrancfortMurat} . Thirdly, we show that the asymptotic expansion of theorem 2.1
coincides with that obtained in theorem~\ref{teo2.1}.
Finally, in the appendix, we recall classical regularity results for the system of
elasticity, and we prove how  Caccioppoli
inequality and Meyer's theorem also hold for the system of elasticity.

\noindent{\bf Acknowledgments:}  The authors wish to thank Michael Vogelius for useful discussions. E. Bonnetier, E. Beretta, and A. Mazzucato acknowledge the support and hospitality  of the Mathematical Sciences Research Institute (MSRI) where part of this work was conducted. Research at MSRI is supported in part by the National Science Foundation  (NSF). The work of A. Mazzucato was partially supported  by NSF grant DMS-0708902 and DMS-1009713. The work of E. Beretta and E. Francini was partially supported by Miur by grant
PRIN 20089PWTPS003

\section{Notations, assumptions and main result}\label{sec1}
Let $\om\subset \RR^d$, $d\geq 2$ be a bounded, smooth domain. For $x\in\der\om$, let us denote by $\nu(x)$ the normal direction to $\der\om$ at point $x$. We use the following notation:

\begin{nota}
 Let $\CC$ be a $4$-th order  tensor, let $A$ and $B$ be $d\times d$
 matrices, and let $u$, $v$ denote vectors in $\RR^d$. We set:
\begin{eqnarray*}
&u\cdot v=\sum_{j=1}^du_jv_j\quad Av=\sum_{j=1}^dA_{ij}v_j\quad \CC A=\sum_{k,l=1}^d \CC_{ijkl}A_{kl}&\\
&\left(\CC A\right)v=\sum_{j,k,l=1}^d\CC_{ijkl}A_{kl}v_j\quad \CC A : B=\sum_{i,j,k,l=1}^d \CC_{ijkl}A_{kl}B_{ij},\\
& |A|=(\sum_{ij}A_{ij}^2)^{1/2}.&
\end{eqnarray*}
Moreover, we denote by $\widehat{A} = (A+A^T)/2$ the
symmetrization of the matrix $A$.
In particular, given a vector valued function $u$ defined in $\om$, we
denote by
$\nabhat u$ the strain $\nabhat u=\frac{1}{2}\left(\nabla u+\left(\nabla u\right)^T\right)$.
\end{nota}

Let $\CC_0\in C^{1,\alpha}(\om)$, for some $\alpha\in(0,1)$, be a fourth order elasticity tensor that satisfies the full symmetry properties:
\begin{equation}\label{symm}
(\CC_0(x))_{ijkl}=(\CC_0(x))_{klij}=(\CC_0(x))_{jikl}\quad \forall\, 1\leq i,j,k,l\leq d\mbox{ and }x\in\om,
\end{equation}
and the strong convexity condition, i.e., there exists a constant $\lambda_0>0$ such that
\begin{equation}\label{strconv}
    \CC_0(x) A: A\geq \lambda_0|A|^2\mbox{ for every }d\times d \mbox{ symmetric matrix }A \mbox{ and }x\in\om.
\end{equation}

Let $\psi\in H^{-1/2}(\der\om)$  satisfying the compatibility condition
\begin{equation}\label{2}
    \int_{\der\om} \psi\cdot R=0,
\end{equation}
for every infinitesimal rigid motion $R$, that is $R(x)=Wx+c$ for some skew-symmetric matrix $W$ and $c\in \RR^d$.

The background displacement field $U\in \tilde{H}(\om)$ is defined as the solution to
\begin{equation}\label{1}
    \left\{\begin{array}{rcl}
             \mbox{div}(\CC_0  \nabhat U)& = & 0\mbox{ in }\om \\
             (\CC_0\nabhat U)\nu & = & \psi\mbox{ on }\der\om,
           \end{array}\right.
\end{equation}
where $\tilde{H}(\om)$ is the space of vector valued functions given by
\[
\tilde{H}(\om)=\left\{u\in H^1(\om;\RR^d)\mbox{ such that}\int_{\der \om}u\ d\sigma=0,\quad \int_{\om}\left(\nabla u-(\nabla u)^T \right)dx=0\right\}
\]

Let $\omep$ denote the a subset of $\Omega$, that contains one or several inhomogeneities.
We assume that $\omep$ is measurable and separated from the boundary, that is
$d(\omep,\der\om)\geq d_0>0$. We also assume that the measure $|\omep| > 0$ tends to 0 as $\ep \to 0$.

Let $\CC_1$ denote the elasticity tensor inside $\omep$. We assume that $\CC_1\in C^\alpha(\om)$ is fully symmetric  and strongly convex, i.e.
\begin{equation}\label{4}
    \CC_1(x) A: A\geq \lambda_0|A|^2, \;\mbox{ for every }d\times d \mbox{ symmetric matrix }A\mbox{ and }x\in\om.
\end{equation}
Let $\CC_\ep$ be the elasticity tensor in the presence of the inhomogeneity
\begin{equation} \label{CepsDef}
\CC_\ep=\CC_0\chi_{\om\setminus\omep}+\CC_1\chi_{\omep},
\end{equation}
and consider the corresponding displacement field $\uep\in\tilde{H}(\om)$ solution to
\begin{equation}\label{5}
    \left\{\begin{array}{rcl}
             \mbox{div}(\CC_\ep  \nabhat \uep)& = & 0\mbox{ in }\om \\
             (\CC_{\ep}\nabhat \uep)\nu & = & \psi\mbox{ on }\der\om.
           \end{array}\right.
\end{equation}

For existence and uniqueness of solutions to (\ref{1}) and (\ref{5}) in $\tilde{H}(\om)$ we refer to \cite{Oleinik}, for example.

\vspace{0.2cm}

Since $d(\omep,\der\om)\geq d_0>0$, there exists a  compact set $K_0$, independent of $\ep$,
such that
\begin{equation}\label{7}
    \omep\subset K_0\subset\om\mbox{ and dist}(\omep,\om\setminus K_0)>d_0/2>0.
\end{equation}

We also introduce the Neumann matrix for the operator $\mbox{div}(\CC_0\nabhat \cdot)$,
i.e. the weak solution to
\begin{equation}\label{8}
    \left\{\begin{array}{rcl}
             \mbox{div}(\CC_0  \nabhat N(\cdot,y))& = & -\delta_y\mathbf{I}_d\mbox{ in }\om \\
             (\CC_0\nabhat N(\cdot,y)\nu & = & -\frac{1}{|\der\om|}\mathbf{I}_d\mbox{ on }\der\om,
           \end{array}\right.
\end{equation}
that satisfies the normalization conditions
\begin{equation}\label{8.5}
    \int_{\der\om}N(x,y) \,d\sigma_x=0,\quad\int_{\om}(\nabla_xN(x,y) -\nabla_xN(x,y)^T)\,dx =0,
\end{equation}
where $\mathbf{I}_d$ is the $d$-dimensional identity matrix.

For the existence of such Neumann matrix and its behavior for $x$ close to $y$ we refer to
\cite{Fuchs} where existence and regularity of the Green's matrix for weakly elliptic systems is considered.

\vspace{0.5cm}

The following result generalizes the compactness result of~\cite{CapdeboscqVogelius_genrep}
to the case of elastic inclusions:
\begin{teo}\label{mainteo}
Let $\omepn$ be a sequence of measurable subsets satisfying (\ref{7}) such that, as $n \to \infty$,
$|\omepn|\to 0$ and
\begin{equation} \label{def_mu}
    |\omepn|^{-1}\chi_{\omepn}dx\to d\mu \mbox{ in the weak}^*\mbox{ topology of } (C(\overline{\om}))^\prime,
\end{equation}
for some regular positive Borel measure $\mu$, such that $\int_\om d\mu=1$.

Given $\psi\in H^{-1/2}(\der\om)$ satisfying (\ref{2}), let $U$ and $\uepn$ denote the
solutions to (\ref{1}) and (\ref{5}) respectively.
There exists a subsequence, not relabeled, and a fourth order tensor $\MM\in L^2(\om,d\mu)$  such that, for $y\in\der\om$,
\begin{equation}\label{9}
    (\uepn-U)(y)=|\omepn|\int_\om \MM(x)\nabhat U(x):\nabhat N(x,y)d\mu_x+o(|\omepn|).
\end{equation}
\end{teo}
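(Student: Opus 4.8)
The proof adapts to the linear elasticity system the compactness argument of Capdeboscq and Vogelius \cite{CapdeboscqVogelius_genrep}; throughout put $w_n:=\uepn-U$ and $d\mu_n:=|\omepn|^{-1}\chi_{\omepn}\,dx$, so that $\mu_n\rightharpoonup\mu$ weak-$*$ by (\ref{def_mu}).

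\emph{Representation formula and energy estimate.} The first step is the identity
\[
(\uepn-U)(y)=\int_{\omepn}(\CC_0-\CC_1)(x)\,\nabhat\uepn(x):\nabhat_x N(x,y)\,dx,\qquad y\in\der\om,
\]
with $N(\cdot,y)$ the Neumann matrix of (\ref{8})--(\ref{8.5}). It comes from Green's formula applied to $w_n$ and $N(\cdot,y)$: both solve the homogeneous system $\dive(\CC_0\nabhat\,\cdot\,)=0$ with matching conormal data near $\der\om$ (recall that $\omepn\subset K_0$ is separated from $\der\om$, cf.\ (\ref{7})), and, using the full symmetry (\ref{symm}) of $\CC_0$ to work with strains throughout, the normalizations (\ref{8.5}) together with $w_n\in\tilde H(\om)$ annihilate every boundary integral; the singularity of $N(\cdot,y)$ at $y\in\der\om$ is removed by excising $B_\delta(y)$ and letting $\delta\to0$. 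Since $w_n$ solves $\dive(\CC_\ep\nabhat w_n)=\dive(\chi_{\omepn}(\CC_0-\CC_1)\nabhat U)$ in $\om$ with zero conormal data on $\der\om$, testing against $w_n$, using the strong convexity (\ref{strconv})--(\ref{4}) of $\CC_\ep$ and the interior bound $\|\nabhat U\|_{L^\infty(K_0)}\le C\,\|\psi\|_{H^{-1/2}(\der\om)}$ (from $\CC_0\in C^{1,\alpha}$ and $K_0$ being away from $\der\om$), gives $\|\nabhat w_n\|_{L^2(\om)}\le C\,|\omepn|^{1/2}$. In particular $\int_\om|\nabhat\uepn|^2\,d\mu_n\le C$, i.e.\ the sequence $\{\chi_{\omepn}\nabhat\uepn\}_n$ is bounded in $L^2(d\mu_n)$, uniformly in $n$.

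\emph{Compactness and passage to the limit.} The key tool is a compactness lemma for sequences bounded in $L^2$ with respect to varying measures: since $\mu_n\rightharpoonup\mu$ and $\sup_n\|\chi_{\omepn}\nabhat\uepn\|_{L^2(d\mu_n)}<\infty$, along a subsequence (not relabeled) there is a symmetric-matrix-valued $P\in L^2(\om,d\mu)$ with $\chi_{\omepn}\nabhat\uepn\,d\mu_n\rightharpoonup P\,d\mu$ weak-$*$ as $\RR^{d\times d}$-valued measures; the absolute continuity of the limiting polarization measure with respect to $\mu$, and its $L^2(d\mu)$ integrability, are exactly what this lemma yields, via a Cauchy--Schwarz inequality for measures. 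For each fixed $y\in\der\om$ the kernel $x\mapsto(\CC_0-\CC_1)(x)\,\nabhat_x N(x,y)$ is continuous near $K_0$ (as $y\notin K_0$, $N(\cdot,y)$ solves a homogeneous system there and is $C^{1,\alpha}$); extending it to an element of $C(\overline\om)$ and inserting the weak-$*$ convergence into the representation formula gives
\[
(\uepn-U)(y)=|\omepn|\int_\om(\CC_0-\CC_1)(x)\,P(x):\nabhat_x N(x,y)\,d\mu(x)+o(|\omepn|).
\]

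\emph{The moment tensor and the main difficulty.} To turn the last display into (\ref{9}) one must write $(\CC_0-\CC_1)\,P=\MM\,\nabhat U$ $\mu$-a.e.\ for a fourth-order tensor $\MM\in L^2(\om,d\mu)$. The route is to run the previous two steps simultaneously for boundary data $\psi^{(1)},\dots,\psi^{(d(d+1)/2)}$ whose background strains $\{\nabhat U^{(m)}(x)\}_m$ form a basis of the symmetric $d\times d$ matrices at $\mu$-a.e.\ $x$, to extract a common subsequence with limiting polarizations $P^{(m)}\in L^2(\om,d\mu)$, and to define $\MM(x)$ as the linear map sending $\nabhat U^{(m)}(x)$ to $(\CC_0-\CC_1)(x)\,P^{(m)}(x)$; nondegeneracy of the basis then puts $\MM$ in $L^2(\om,d\mu)$. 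The substance of the argument is to verify that for an \emph{arbitrary} admissible $\psi$ one still has $(\CC_0-\CC_1)\,P=\MM\,\nabhat U$ $\mu$-a.e., i.e.\ that the limiting polarization depends on the background displacement only through its strain and in a pointwise-linear, local way (equivalently, that $P=0$ $\mu$-a.e.\ where $\nabhat U=0$ and that the dependence factors through a genuine tensor field, not a nonlocal operator). This identification, together with the compactness lemma for varying measures, is where the weight of the proof lies; the remaining ingredients are the energy estimate above and standard interior and boundary regularity for the elasticity system.
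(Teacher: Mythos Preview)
Your outline captures the representation formula, the energy bound, and the compactness of the polarization measures correctly, and you rightly locate the real difficulty in the identification step $(\CC_0-\CC_1)P=\MM\nabhat U$ $\mu$-a.e. But the route you propose for that step is not the one the paper takes, and as written it has a genuine gap.

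You suggest building $\MM$ from a family of boundary data $\psi^{(1)},\dots,\psi^{(d(d+1)/2)}$ whose background strains $\nabhat U^{(m)}(x)$ span the symmetric matrices at $\mu$-a.e.\ $x$. For a variable anisotropic $\CC_0$ there is no reason such data exist; arranging $d(d+1)/2$ solutions of $\dive(\CC_0\nabhat U)=0$ with pointwise independent strains on a prescribed set is a nontrivial (Runge-type) statement that the paper never needs. More importantly, even granting such a basis, you still owe the argument that the limit $P$ depends on $U$ only through $\nabhat U(x)$, pointwise and linearly; you flag this as ``where the weight of the proof lies'' but do not supply it.

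The paper avoids both issues by a different construction. It does \emph{not} use boundary data to generate a basis of strains. Instead it introduces the affine functions $\vij(x)=\tfrac12(\mathbf e_ix_j+\mathbf e_jx_i)-c_{ij}$, whose strains $\nabhat\vij=\tfrac12(\mathbf e_i\otimes\mathbf e_j+\mathbf e_j\otimes\mathbf e_i)$ are \emph{constant} and already span the symmetric matrices at every point, and their perturbations $\vepij$ solving $\dive(\CC_\ep\nabhat\vepij)=\dive(\CC_0\nabhat\vij)$ with matching conormal data. The components $\MM_{ijlm}$ are then \emph{defined} as the $L^2(d\mu)$ densities of the weak-$*$ limits of $|\omepn|^{-1}\chi_{\omepn}(\CC_1-\CC_0)\nabla v_{\ep_n}^{ij}$. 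The identification for an arbitrary $\psi$ is carried out by a concrete div--curl (compensated compactness) computation: using the weak formulations of the equations for $(U,\uep)$ and $(\vij,\vepij)$ one proves
\[
\int_{\omep}(\CC_0-\CC_1)\nabhat U:\nabhat\vepij\,\Phi\,dx=\int_{\omep}(\CC_0-\CC_1)\nabhat\uep:\nabhat\vij\,\Phi\,dx+o(|\omep|),
\]
for every $\Phi\in C^0(\overline\om)$, and passing to the limit gives exactly $\int_\om\Phi\,d\eta=\int_\om\Phi\,\MM\nabhat U\,d\mu$. This calculation, together with the $L^2$ estimate in Lemma~\ref{lemma1} (needed to show the cross terms are $o(|\omep|)$), is the substantive content you are missing; once it is in place the proof concludes as in your last display.
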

We will prove this result in the next section.

\section{Proof of theorem~\ref{mainteo}}\label{sec2}

\subsection{Preliminary estimates}

Let $F\in H^{-1}(\om)$ and $f\in H^{-1/2}(\om)$ satisfying the compatibility conditions
\begin{equation*}
    \int_\om F \,dx=\int_{\der\om}f\,d\sigma_x\mbox{ and }\int_\om F\cdot R \,dx=\int_{\der\om} f\cdot R \,d\sigma_x,
\end{equation*}
 for every infinitesimal rigid motion $R$.

 Let $V$ and $\vep$ in $\tilde{H}(\om)$ solve
 \begin{equation}\label{10}
    \left\{\begin{array}{rcl}
             \mbox{div}(\CC_0  \nabhat V)& = & F\mbox{ in }\om \\
             (\CC_0\nabhat V)\nu & = & f\mbox{ on }\der\om,
           \end{array}\right.
\end{equation}
and
\begin{equation}\label{11}
    \left\{\begin{array}{rcl}
             \mbox{div}(\CC_\ep  \nabhat \vep)& = & F\mbox{ in }\om \\
             (\CC_0\nabhat \vep)\nu & = & f\mbox{ on }\der\om,
           \end{array}\right.
\end{equation}
respectively.

\begin{lm}\label{lemma1}
Let $F \in C^{\alpha}(\overline\Omega)$, with $0<\alpha<1$ and let $0<\eta<1/d$.
There exists a constant $C > 0$, such that
\begin{equation}\label{energygrad}
\|\vep-V\|_{H^1(\om)}\leq C|\omep|^{1/2}\left(\|F\|_{C^\alpha(\om)}
+ \|F\|_{H^{-1}(\om)}+\|f\|_{H^{-1/2}(\der\om)}\right),
\end{equation}
and
\begin{equation}\label{energyl2}
\|\vep-V\|_{L^2(\om)}\leq C|\omep|^{\frac{1}{2}+\frac{1}{d}-\eta}\left( \|F\|_{C^\alpha(\om)} + \|F\|_{H^{-1}(\om)}+\|f\|_{H^{-1/2}(\der\om)}\right).
\end{equation}
\end{lm}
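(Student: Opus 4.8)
The estimate~\eqref{energygrad} is the standard energy estimate for a small-volume perturbation, so I would establish it first. Writing $w_\ep = \vep - V$, subtract~\eqref{10} from~\eqref{11} to get that $w_\ep \in \tilde H(\om)$ solves $\dive(\CC_\ep \nabhat w_\ep) = -\dive((\CC_\ep - \CC_0)\nabhat V)$ in $\om$ with homogeneous Neumann data, since $\CC_\ep - \CC_0 = (\CC_1 - \CC_0)\chi_{\omep}$ is supported in $\omep$. Testing the weak formulation against $w_\ep$ itself and using the strong convexity~\eqref{strconv}--\eqref{4} of $\CC_\ep$ (uniform in $\ep$, with constant $\lambda_0$) together with Korn's inequality on $\tilde H(\om)$, I get
\begin{equation*}
\lambda_0 \|\nabhat w_\ep\|_{L^2(\om)}^2 \leq \left| \int_{\omep} (\CC_1 - \CC_0)\nabhat V : \nabhat w_\ep \, dx \right| \leq C \|\nabhat V\|_{L^2(\omep)} \|\nabhat w_\ep\|_{L^2(\om)} .
\end{equation*}
Hence $\|w_\ep\|_{H^1(\om)} \leq C \|\nabhat V\|_{L^2(\omep)}$. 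To convert $\|\nabhat V\|_{L^2(\omep)}$ into a power of $|\omep|$, I use interior regularity for the background problem~\eqref{10}: since $F \in C^\alpha(\overline\om)$ and $\omep \subset K_0 \Subset \om$ by~\eqref{7}, Schauder (or even just $W^{1,p}$ for large $p$) estimates give $\nabhat V \in L^\infty(K_0)$ with $\|\nabhat V\|_{L^\infty(K_0)} \leq C(\|F\|_{C^\alpha(\om)} + \|F\|_{H^{-1}(\om)} + \|f\|_{H^{-1/2}(\der\om)})$, whence $\|\nabhat V\|_{L^2(\omep)} \leq |\omep|^{1/2} \|\nabhat V\|_{L^\infty(K_0)}$, giving~\eqref{energygrad}.

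For the $L^2$ estimate~\eqref{energyl2} I would use a duality (Aubin--Nitsche) argument combined with the higher integrability furnished by Meyer's theorem (proved for the elasticity system in the appendix). For $g \in L^2(\om;\RR^d)$ with zero average and zero rotation average, let $\phi_\ep \in \tilde H(\om)$ solve $\dive(\CC_\ep \nabhat \phi_\ep) = g$ with homogeneous Neumann data; then
\begin{equation*}
\int_\om w_\ep \cdot g \, dx = \int_\om \CC_\ep \nabhat w_\ep : \nabhat \phi_\ep \, dx = -\int_{\omep} (\CC_1 - \CC_0)\nabhat V : \nabhat \phi_\ep \, dx,
\end{equation*}
using $w_\ep$ as test function in the $\phi_\ep$-equation and symmetry of $\CC_\ep$. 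Bounding the right side by $C\|\nabhat V\|_{L^\infty(K_0)}\, |\omep|^{1 - \eta'}{}^{\!\!} \cdots$ — more precisely by Hölder's inequality with exponents $p$ and $p'$ where $p$ is the Meyer exponent for which $\|\nabhat \phi_\ep\|_{L^p(\om)} \leq C\|g\|_{L^2(\om)}$ uniformly in $\ep$ — one gets $\left|\int_\om w_\ep\cdot g\right| \leq C\|\nabhat V\|_{L^\infty(K_0)}\,|\omep|^{1/p'}\|g\|_{L^2(\om)}$. The Meyer exponent for the elasticity system in dimension $d$ satisfies $p > d$ (it degenerates to $d$ in the worst-case ellipticity ratio but is strictly larger here because the contrast $\CC_1/\CC_0$ is bounded), so $1/p' = 1 - 1/p$ can be taken as close to $1$ as desired; taking the supremum over $g$ and optimizing the exponent yields $\|w_\ep\|_{L^2(\om)} \leq C |\omep|^{1 - 1/p}\|\nabhat V\|_{L^\infty(K_0)}$ with $1 - 1/p \geq 1/2 + 1/d - \eta$ for any prescribed $\eta \in (0,1/d)$, which is~\eqref{energyl2}.

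\textbf{Main obstacle.} The delicate point is the $L^2$ bound: one needs an estimate on $\|\nabhat \phi_\ep\|_{L^p}$ for the \emph{perturbed} adjoint problem that is uniform in $\ep$, for some $p$ strictly bigger than $d$. A naive $H^1$ bound on $\phi_\ep$ only gives $\|\nabhat\phi_\ep\|_{L^2}$, which via Sobolev embedding of $w_\ep$ is not enough to reach the exponent $1/2 + 1/d$. This is exactly where Meyer's higher-integrability theorem for systems with merely bounded measurable coefficients (the coefficients $\CC_\ep$ jump across $\der\omep$) is essential, and why the appendix extends it to the elasticity system; the uniformity in $\ep$ comes from the fact that the Meyer exponent depends only on the ellipticity constant $\lambda_0$ and the $L^\infty$ bound on $\CC_\ep$, both of which are $\ep$-independent. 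A secondary technical point is to make sure the duality argument is carried out in the quotient space modulo rigid motions (or equivalently in $\tilde H(\om)$), so that all the auxiliary problems are well posed; this is routine given the compatibility conditions but must be stated carefully.
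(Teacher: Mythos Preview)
Your argument for~\eqref{energygrad} is correct and matches the paper's.

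The $L^2$ argument, however, has a genuine gap: Meyer's theorem does \emph{not} give an exponent $p$ anywhere near $2d/(d-2)$, let alone $p>d$. For an elliptic system with merely bounded measurable coefficients (which is exactly what $\CC_\ep$ is, since it jumps across $\der\omep$), Meyer's theorem only produces some $p=2+\eta_0$ with $\eta_0>0$ small, depending on the ellipticity ratio; see the appendix, where the conclusion is precisely $u\in H^{1,2+\eta}_{\mathrm{loc}}$. With that $p$ your duality bound yields $\|w_\ep\|_{L^2}\leq C|\omep|^{1-1/p}=C|\omep|^{1/2+O(\eta_0)}$, which is strictly weaker than the stated exponent $\tfrac12+\tfrac1d-\eta$ for arbitrary $\eta\in(0,1/d)$. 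Your sentence ``it degenerates to $d$ in the worst-case ellipticity ratio'' is not correct: the Meyer exponent degenerates to $2$, not to $d$.

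The paper sidesteps this by running the duality with the \emph{smooth background} operator $\CC_0$ rather than the perturbed $\CC_\ep$. Concretely, one uses the alternative identity
\[
\int_\om \CC_0\,\nabhat(\vep-V):\nabhat w\,dx \;=\; \int_{\omep}(\CC_0-\CC_1)\,\nabhat \vep:\nabhat w\,dx,
\]
and chooses $w\in\tilde H(\om)$ solving $\dive(\CC_0\nabhat w)=V-\vep$ with the appropriate Neumann data. Because $\CC_0\in C^{1,\alpha}$, interior regularity gives $w\in H^2(K_0)$ with $\|w\|_{H^2(K_0)}\leq C\|\vep-V\|_{L^2(\om)}$, and Sobolev embedding then yields $\nabla w\in L^p(K_0)$ for every $p<2d/(d-2)$ (all $p<\infty$ when $d=2$). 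Pairing with $\|\nabla\vep\|_{L^q(\omep)}\leq C|\omep|^{1/q}$ for the conjugate $q\in(2d/(d+2),2)$---obtained from H\"older and the already-proved~\eqref{energygrad}---gives $\|\vep-V\|_{L^2}\leq C|\omep|^{1/q}$ with $1/q$ arbitrarily close to $\tfrac12+\tfrac1d$. The key point you are missing is that the required $L^p$ integrability of the dual gradient comes from $H^2$ regularity of the smooth-coefficient problem, not from Meyer's theorem on the rough one.
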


\begin{proof}
We adapt the arguments of~(\cite{CapdeboscqVogelius_genrep}) to the system of elasticity.
Since $V$ and $\vep$ solve (\ref{10}) and (\ref{11}) respectively, for every $w\in H^1(\om)$, we have\\\[
\int_\om\CC_\ep \nabhat(\vep-V):\nabhat w\,dx=\int_\omep(\CC_0-\CC_1)\nabhat V:\nabhat w\,dx.
\]
By choosing $w=\vep-V$ and applying Korn's inequality, we show that
\[
\int_{\om}|\nabla(\vep-V)|^2dx\leq C  \int_\om|\nabhat(\vep-V)|^2dx\leq C|\omep|^{1/2}
\|\nabla V\|_{L^\infty(\omep)} \|\nabla(\vep-V)\|_{L^2(\om)}.
\]
It follows from interior regularity results for the elasticity system
with regular coefficients (see, for example, Theorem 6.III, chapter 2 in \cite{Campanato})
that
  \begin{align}
  \|\nabla V\|_{L^{\infty}(\omep)}&\leq C\left(
\| V\|_{H^1(\om)} + \|F\|_{C^\alpha(K_0)} \right) \nonumber \\
  & \leq C\left( \|F\|_{H^{-1}(\om)}+\|f\|_{H^{-1/2}(\der\om)}
 + \|F\|_{C^{\alpha}(K_0)} \right), \label{gradVest}
 \end{align}
and, hence,
\[\|\vep-V\|_{H^1(\om)}\leq C |\omep|^{1/2}\left(\|F\|_{H^{-1}(\om)}+\|f\|_{H^{-1/2}(\der\om)} + \|F\|_{C^\alpha(K_0)} \right).\]

\vspace{0.1cm}

We also have, for any $w\in H^1(\om)$,
\begin{equation}\label{11.5}
\int_\om\CC_0\nabhat(\vep-V):\nabhat w\,dx=\int_\omep (\CC_0-\CC_1)\nabhat\vep:\nabhat w\,dx.
\end{equation}
Let us select $w\in\tilde{H}(\om)$ as the solution to
\begin{equation*}
    \left\{\begin{array}{rcl}
             \mbox{div}(\CC_0  \nabhat w)& = &V-\vep\mbox{ in }\om \\
             (\CC_0\nabhat w)\nu & = &\frac{1}{|\partial\Omega|}\int_\om(V-\vep)dx\mbox{ on }\der\om,
           \end{array}\right.
\end{equation*}

By the smoothness assumption on $\CC_0$ and by interior regularity estimates (see, for example, Theorem 2.I, chapter 2 in \cite{Campanato}) we have that
\[
\|w\|_{H^2(K_0)}\leq C(\|\vep-V\|_{L^2(\om)}+\|w\|_{H^1(\om)}).
\]
By Korn and Poincar\'{e} inequalities
\[\|w\|_{H^1(\om)}\leq C\|\nabla w\|_{L^2(\om)}\leq C\|\nabhat w\|_{L^2(\om)}\leq \|V-\vep\|_{L^2(\om)}\]
and, hence,
\begin{equation}\label{11.65}
\|w\|_{H^2(K_0)}\leq C\|\vep-V\|_{L^2(\om)}
\end{equation}
By Sobolev Embedding Theorem, we have that $\nabla w\in L^p(K_0)$ for every  $1<p<d^*$ where $d^*=\frac{2d}{d-2}$ for $d>2$ and
$d^*=+\infty$ for $d=2$, and
\begin{equation}\label{11.80}
\left(\int_{K_0}|\nabla w|^pdx\right)^{\frac{1}{p}}\leq C_p \|w\|_{H^2(K_0)}\leq C\|\vep-V\|_{L^2(\om)}.
\end{equation}

Let us choose $q\in(\frac{2d}{d+2},2)$ and $p$ such that $\frac{1}{p}+\frac{1}{q}=1$. Notice that, $p\in(1,d^*)$.
By inserting $w$ into (\ref{11.5}) we obtain
\begin{eqnarray}\label{11.75}
  \int_\om (\vep-V)^2 dx &=& \int_\om \CC_0\nabhat(\vep-V):\nabhat w\,dx= \int_\omep(\CC_0-\CC_1)\nabhat\vep:\nabhat w\, dx \nonumber \\
   &\leq& C\left(\int_\omep |\nabla\vep|^q\right)^{\frac{1}{q}}\left(\int_\omep|\nabla w|^p\right)^{\frac{1}{p}}
   \\
   &\leq& C\left(\int_\omep |\nabla\vep|^q\right)^{\frac{1}{q}}\|\vep-V\|_{L^2(\om)}.
\end{eqnarray}
Now, by H\"{o}lder inequality and (\ref{gradVest}) we get
\begin{eqnarray}\label{11.85}
  \|\nabla\vep\|_{L^q(\omep)} &\leq & \|\nabla(\vep-V)\|_{L^q(\omep)}+  \|\nabla V\|_{L^q(\omep)} \nonumber\\
   &\leq& |\omep|^{\frac{1}{q}-\frac{1}{2}} \|\nabla(\vep-V)\|_{L^2(\omep)}+|\omep|^{\frac{1}{q}} \|\nabla V\|_{L^\infty(\omep)}\\
  \nonumber &\leq & C |\omep|^{1/q}\left(\|F\|_{H^{-1}(\om)}+\|f\|_{H^{-1/2}(\der\om)} +\| F\|_{L^{\infty}(\Omega)}\right)
\end{eqnarray}
A combination of (\ref{11.75}), (\ref{11.80}) and (\ref{11.85}) yields
\[
\|\vep-V\|_{L^2(\om)}\leq C_q|\omep|^{1/q}
\left(\|F\|_{H^{-1}(\om)}+\|f\|_{H^{-1/2}(\der\om)} +\| F\|_{C^{\alpha}(\Omega)}\right)
\]
 Note that since for $q\searrow \frac{2d}{d+2}$,
we have $\frac{1}{q}\nearrow \frac{1}{d} + \frac{1}{2}$.
It follows that given  any $0<\eta<1/d$ there exists a constant $C$ such that
\[
\|\vep-V\|_{L^2(\om)}\leq C|\omep|^{\frac{1}{d}
+\frac{1}{2}-\eta}
\left(\|F\|_{H^{-1}(\om)}+\|f\|_{H^{-1/2}(\der\om)} +\| F\|_{C^{\alpha}(\Omega)}\right)
\]
\end{proof}


\subsection{Definition of the elastic moment tensor}\label{sec3}

Let
\begin{equation}\label{vij}
\vij=\frac{1}{2}\left(\mathbf{e}_i x_j+\mathbf{e}_j x_i\right)- c_{ij}
\end{equation}
where $\mathbf{e}_i$ is the $i$-th coordinate direction and
\[c_{ij}=\frac{1}{2|\der \om|}\int_{\der\om}\left(\mathbf{e}_i x_j+\mathbf{e}_j x_i\right)d\sigma\]
and consider $\vepij\in\tilde{H}(\om)$ solution to the problem
\begin{equation}\label{12}
    \left\{\begin{array}{rcl}
             \mbox{div}(\CC_\ep  \nabhat \vepij)& = & \mbox{div}(\CC_0  \nabhat \vij)\mbox{ in }\om \\
             (\CC_\ep\nabhat \vepij)\nu & = & (\CC_0\nabhat \vij)\nu\mbox{ on }\der\om,
           \end{array}\right.
\end{equation}
Observe now that
\begin{eqnarray*}
\left\|\frac{1}{|\omep|}\chi_{\omep}(\CC_1-\CC_0)\nabhat \vepij\right\|_{L^1(\om)}&\leq&
\frac{1}{|\omep|}\int_{\omep}\left|(\CC_1-\CC_0)\nabhat(\vepij-\vij)\right|dx\\
&+&\frac{1}{|\omep|}\int_{\omep}
\left|(\CC_1-\CC_0)\nabhat\vij\right|dx.
\end{eqnarray*}
By Lemma \ref{lemma1} and recalling that $\CC_0$ and $\CC_1$ are bounded and that
\begin{equation}\label{gradvij}
\nabla\vij=\nabhat \vij=\frac{1}{2}(\mathbf{e}_i\otimes\mathbf{e}_j+\mathbf{e}_j\otimes\mathbf{e}_i),
\end{equation}
we have
\[
\left\|\frac{1}{|\omep|}\chi_{\omep}(\CC_1-\CC_0)\nabla \vepij\right\|_{L^1(\om)}\leq
C.
\]
Hence, possibly extracting a subsequence, we may assume that
\begin{equation}\label{conv2}
|\omepn|\chi_{\omepn}(\CC_1-\CC_0)\nabla v_{\ep_n}^{ij}\rightarrow d\MM_{ijlm},
\end{equation}
in the weak$^*$ topology of $C^0(\overline{\om})$,
where $d\MM_{ijlm}$ is a regular Borel Measures with support in $K_0$.
Let $\Phi\in C^0(\overline{\om})$.
By definition of $d\MM_{ijlm}$, we see that
\begin{eqnarray*}
  &&\left|\int_\om\Phi d\MM_{ijlm}\right| = \left|\lim_{n\to\infty}\frac{1}{|\omepn|}\int_\om\chi_{\omepn}(\CC_1-\CC_0)\nabhat v_{\ep_n}^{ij}\Phi \,dx \right|\\
   &&\leq \underline{\lim}_{n\to\infty}\frac{1}{|\omepn|}\int_\om\chi_{\omepn}\left|(\CC_1-\CC_0)\nabhat (v_{\ep_n}^{ij}-\vij)\right|\left|\Phi\right|\,dx \\
   &&+  \lim_{n\to\infty} \frac{1}{|\omepn|}\int_\om\chi_{\omepn}\left|(\CC_1-\CC_0)\nabhat\vij\right|\left|\Phi\right|\,dx\\
   &&\leq\underline{\lim}_{n\to\infty}\frac{C}{|\omepn|^{1/2}}\left(\int_\om|\nabhat( v_{\ep_n}^{ij}-\vij)|^2\,dx\right)^{1/2}
   \left(\int_\om\frac{1}{|\omepn|}\chi_{\omepn}|\Phi|^2dx\right)^{1/2}\\
   &&+C\left(\int_\om|\Phi|^2d\mu\right)^{1/2}\leq C\left(\int_\om|\Phi|^2d\mu\right)^{1/2}.
\end{eqnarray*}
Hence
\[\Phi\rightarrow \int_\om\Phi dM_{ijlm}\]
is a bounded functional on $L^2(\om,d\mu)$, and
\[\int_\om\Phi dM_{ijlm}=\int_\om\Phi \MM_{ijlm} d\mu\]
for some function $\MM_{ijlm}\in L^2(\om,d\mu)$.
The tensor $\MM$ actually relates to the weak limit of $\uep$, as the next lemma
expresses:
\begin{lm}\label{lemma2}
Let $U$ and $\uep$ denote the solutions to (\ref{1}) and (\ref{5}) for $\psi\in H^{-1/2}(\der\om)$ satisfying the compatibility conditions (\ref{2}).
Let $\omepn$ such that $|\omepn|\rightarrow 0$ be a sequence for which (\ref{7}), (\ref{def_mu}) and (\ref{conv2}) hold.

Then, $\frac{1}{|\omepn|}\chi_{\omepn}(\CC_1-\CC_0)\nabhat u_{\ep_n} dx $ is convergent in the weak$^*$ topology of $(C^0(\overline{\om}))^\prime$ with
\[
\lim_{n\to\infty}\frac{1}{|\omepn|}\chi_{\omepn}(\CC_1-\CC_0)\nabhat u_{\ep_n} dx=
\MM\nabhat U d\mu
\]
\end{lm}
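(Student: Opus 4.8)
The plan is to exploit the decomposition of $\uep$ (resp.\ $U$) along the basis of linear displacements $\vij$ and the corresponding solutions $\vepij$, and then to transfer the convergence (\ref{conv2}) to the displacement $\uep$. First I would observe that, since $\nabhat U$ is a fixed $L^\infty(K_0)$-function (by the interior regularity estimate (\ref{gradVest}) applied to $U$), at each point $x$ we may write $\nabhat U(x)=\sum_{ij} a_{ij}(x)\,\nabhat\vij$ with $a_{ij}(x)=(\nabhat U(x))_{ij}$, so formally one expects
\[
\frac{1}{|\omepn|}\chi_{\omepn}(\CC_1-\CC_0)\nabhat\uepn
\;\approx\;\sum_{ij}\frac{1}{|\omepn|}\chi_{\omepn}(\CC_1-\CC_0)\,a_{ij}\,\nabhat\vepnij
\;\rightharpoonup\;\sum_{ij}a_{ij}\,\MM_{ij}\,d\mu = \MM\nabhat U\,d\mu .
\]
The content of the lemma is to make this replacement rigorous, the point being that $\uep$ and $\vepij$ solve equations with the \emph{same} operator $\dive(\CC_\ep\nabhat\cdot)$ but different data, and $\nabhat U$ is not literally constant.

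The key steps, in order, would be: (1) By linearity and uniqueness in $\tilde H(\om)$, reduce to showing the statement for $\psi$ such that $U=\vhk$ for a fixed pair $(h,k)$; equivalently, it suffices to treat $\uep=\vephk$ and $U=\vhk$, since a general $U$ is, up to an infinitesimal rigid motion that does not affect $\nabhat U$, a \emph{variable} linear combination of the $\vhk$. (2) Because $\nabhat\vhk$ is the \emph{constant} matrix $\frac12(\mathbf{e}_h\otimes\mathbf{e}_k+\mathbf{e}_k\otimes\mathbf{e}_h)$, in this reduced case the claim is exactly $\frac{1}{|\omepn|}\chi_{\omepn}(\CC_1-\CC_0)\nabhat\vepnhk\,dx \rightharpoonup d\MM_{hk}=\MM_{hk}\,d\mu$, which is (\ref{conv2}) together with the identification of $d\MM_{hk}$ as $\MM_{hk}d\mu$ established just before the lemma; so this base case is essentially free. (3) For the general $U$, test the measure $\frac{1}{|\omepn|}\chi_{\omepn}(\CC_1-\CC_0)\nabhat\uepn$ against an arbitrary $\Phi\in C^0(\overline\om)$ and compare with $\sum_{ij}(\nabhat U)_{ij}\,\nabhat\vepnij$: the difference involves $\frac{1}{|\omepn|}\chi_{\omepn}(\CC_1-\CC_0)\nabhat\big(\uepn-\sum_{ij}(\nabhat U)_{ij}\vepnij\big)$; control the $\vij$-part using that $\nabhat U\in C^\alpha$ near $K_0$ so that $(\nabhat U)(x)$ is close to its value at a reference point, combined with the uniform bound $\frac{1}{|\omepn|}\|\chi_{\omepn}(\CC_1-\CC_0)\nabhat\vepnij\|_{L^1}\le C$ from Lemma~\ref{lemma1}, and the $\vepij$-part by the same $L^1$ bound times the oscillation of $\nabhat U$; a covering/partition argument on $K_0$ into small pieces on which $\nabhat U$ varies little then lets the error go to $0$. (4) Pass to the limit using (\ref{conv2}) on each piece and reassemble, getting $\int_\om\Phi\,d(\MM\nabhat U\,d\mu)$.

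The main obstacle I expect is step (3): the quantity $\nabhat\uep$ is only $L^2$ a priori, not pointwise controlled, and the naive ``$\nabhat U$ is a combination of $\nabhat\vij$'' identity holds pointwise for $U$ but \emph{not} after solving the $\CC_\ep$-problem, so $\uep$ is genuinely different from $\sum_{ij}(\nabhat U)_{ij}\vepij$. The right way around this is to use that $\uep$ and $\vepij$ both solve the transmission problem with operator $\dive(\CC_\ep\nabhat\cdot)$, write an integral identity (like (\ref{11.5})) for the difference of $\nabhat\uep$ and the frozen-coefficient combination, and bound it by $C|\omepn|^{1/2}$ times the modulus of continuity of $\nabhat U$ via Lemma~\ref{lemma1} and Korn's inequality — so that after dividing by $|\omepn|$ one still gets something that vanishes as the mesh of the covering is refined and $n\to\infty$. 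Holder continuity of $\nabhat U$ on $K_0$, available since $\psi\in H^{-1/2}$ forces $U\in C^{1,\alpha}_{\mathrm{loc}}$ away from $\der\om$ where $\CC_0\in C^{1,\alpha}$, is what makes this quantitative estimate work.
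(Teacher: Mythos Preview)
Your plan has a genuine gap in step (3), and the hoped-for bound ``$C|\omepn|^{1/2}$ times the modulus of continuity of $\nabhat U$'' is not what Lemma~\ref{lemma1} gives. If you freeze $\nabhat U$ at a point $x_k$ of a covering and set $w_\ep^{(k)}=\uep-\sum_{ij}(\nabhat U(x_k))_{ij}\vepij$, $W^{(k)}=U-\sum_{ij}(\nabhat U(x_k))_{ij}\vij$, then the energy estimate in the proof of Lemma~\ref{lemma1} yields
\[
\|\nabhat(w_\ep^{(k)}-W^{(k)})\|_{L^2(\om)}\;\leq\;C\,|\omep|^{1/2}\,\|\nabhat W^{(k)}\|_{L^\infty(\omep)}\;=\;C\,|\omep|^{1/2}\,\|\nabhat U-\nabhat U(x_k)\|_{L^\infty(\omep)}.
\]
The point is that the $L^\infty$ norm on the right is taken over \emph{all} of $\omep$, not over the small ball $B_k$: since $\omep$ is spread throughout $K_0$, this norm is $O(1)$, not $O(\delta^\alpha)$. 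Consequently $\frac{1}{|\omep|}\int_{\omep\cap B_k}|\nabhat w_\ep^{(k)}|$ is only controlled by $C\,|\omep\cap B_k|^{1/2}/|\omep|^{1/2}$, and summing over $N\sim\delta^{-d}$ pieces gives an error of order $N^{1/2}$ by Cauchy--Schwarz, which blows up as the mesh is refined. The double limit you envisage (first $n\to\infty$, then $\delta\to 0$) does not close.

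The paper avoids localization altogether. Instead of comparing $\nabhat\uep$ to a frozen combination of the $\nabhat\vepij$ in $L^1(\omep)$, it proves the \emph{transposition identity}
\[
\int_{\omep}(\CC_0-\CC_1)\nabhat U:\nabhat\vepij\,\Phi\,dx
\;=\;
\int_{\omep}(\CC_0-\CC_1)\nabhat\uep:\nabhat\vij\,\Phi\,dx \;+\; o(|\omep|),
\]
directly from the weak formulations of the equations for $U,\uep$ and $\vij,\vepij$. After expanding $\nabhat(\vepij\Phi)$ and $\nabhat(\vij\Phi)$ and using the symmetry of the tensors, all principal terms cancel and the remainder consists of integrals of the type $\int_\om\CC_\ep\nabhat(\uep-U):((\vepij-\vij)\otimes\nabla\Phi)$, i.e.\ products of one $H^1$-small factor and one $L^2$-small factor, each of size $|\omep|^{1/2+}$ by Lemma~\ref{lemma1}; this gives $o(|\omep|)$ globally with no covering needed. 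Passing to the limit in the identity, the left side converges to $\int_\om \MM_{ij}\nabhat U\,\Phi\,d\mu$ by (\ref{conv2}), and the right side is the $(i,j)$-component (against $\Phi$) of the subsequential limit $d\eta$, which identifies $d\eta=\MM\nabhat U\,d\mu$. This is a div--curl/compensated-compactness style argument, and it is exactly what is missing from your outline.
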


\begin{proof} It suffices to prove that we may extract a subsequence of $\{\omepn\}$ such that
$\frac{1}{|\omepnj|}\chi_{\omepnj}(\CC_1-\CC_0)\nabhat \uepnj dx$ converges to
$\MM\nabhat Ud\mu$. The fact that the limit is independent of the particular subsequence
guarantees that the entire sequence is convergent.

Proceeding as for $\vepij$, we see that
\begin{eqnarray*}
\left\|\frac{1}{|\omepn|}\chi_{\omepn}(\CC_1-\CC_0)\nabhat u_{\ep_n}\right\|_{L^1(\om)}
&\leq&
  \frac{1}{|\omepn|}\int_{\omepn}\left|(\CC_1-\CC_0)\nabhat( u_{\ep_n}-U)\right|dx
\\
&+&
\frac{1}{|\omepn|}\int_{\omepn}\left|(\CC_1-\CC_0)\nabhat U\right|dx\\
&\leq& C\|\psi\|_{H^{-1/2}(\der\om)},
\end{eqnarray*}
hence, possibly extracting a subsequence, that we do not relabel, we may assume that,
for some matrix-valued measure $\eta$,
\[
\frac{1}{|\omepn|}\chi_{\omepn}(\CC_1-\CC_0)\nabhat u_{\ep_n} dx\rightarrow d\eta
\]
in the weak$^*$ topology of $(C^0(\overline{\om}))^\prime$.

We must now show that, for any scalar function $\Phi$,
\begin{equation}\label{dens}
\int_\om\Phi d\eta=\int_\om\Phi \MM \nabhat Ud\mu.
\end{equation}

In order to do this, it is enough to prove that
\begin{equation}\label{contofinale}
  \int_\omep(\CC_0-\CC_1)\nabhat U:\nabhat \vepij\Phi\,dx =
  \int_\omep(\CC_0-\CC_1)\nabhat\uep:\nabhat \vij\Phi\,dx+o(|\omep|),
\end{equation}
because then, by passing to the limit along subsequences of $\omepn$ in (\ref{contofinale}), we get
(\ref{dens}).

Let us notice that, since
\[\left\{\begin{array}{rcl}\mbox{div}\left(\CC_0\nabhat U\right)&=&\mbox{div}\left(\CC_\ep\nabhat \uep\right)\quad\mbox{in}\quad\om\\
(\CC_0\nabhat U)\nu&=&(\CC_\ep\nabhat \uep)\nu\quad\mbox{on}\quad\der\om,\end{array}\right.\]
for every vector valued test function $\Psi$ we have that
\begin{equation}\label{solint1}
\int_\om\CC_0\nabhat U:\nabhat\Psi\,dx=\int_\om\CC_\ep\nabhat \uep:\nabhat\Psi\,dx.\end{equation}
For the same reason
\begin{equation}\label{solint2}\int_\om\CC_0\nabhat \vij:\nabhat\Psi\,dx=\int_\om\CC_\ep\nabhat \vepij:\nabhat\Psi\,dx.\end{equation}
We can calculate
\begin{eqnarray*}
&&\int_\om\!\!(\CC_0-\CC_\ep)\nabhat U:\nabhat \vepij\Phi\,dx-
  \int_\om\!\!(\CC_0-\CC_\ep)\nabhat\uep:\nabhat \vij\Phi\,dx\\
  &&=\int_\om\!\!\left(\CC_0\nabhat U:\nabhat(\vepij\Phi)-\CC_\ep\nabhat \vepij:\nabhat (U\Phi)\right)dx
  \\&&
  -\int_\om\!\!\left(\CC_0\nabhat\vij:\nabhat(\uep\Phi)-\CC_\ep\nabhat\uep:\nabhat (\vij\Phi)\right)dx\\
  &&-\int_\om\!\!\left(\CC_0\nabhat U:(\vepij\otimes\nabla\Phi)-\CC_\ep\nabhat \vepij:(U\otimes\nabla \Phi)\right)dx\\
  &&+\int_\om\!\!\left(\CC_0\nabhat\vij:(\uep\otimes\nabla\Phi)-\CC_\ep\nabhat\uep:(\vij\otimes\nabla\Phi)\right)dx
  \end{eqnarray*}
By (\ref{solint1}) and (\ref{solint2}) and recalling that $\CC_\ep=\CC_0$ in $\om\setminus\omep$, we can write
\begin{eqnarray*}
&&\int_\omep\!\!(\CC_0-\CC_1)\nabhat U:\nabhat \vepij\Phi\,dx- \int_\omep\!\!(\CC_0-\CC_1)\nabhat\uep:\nabhat \vij\Phi\,dx\\
&&\hphantom{mm}=
\int_\om\!\!\left(\CC_\ep\nabhat \uep:\nabhat(\vepij\Phi)-\CC_0\nabhat \vij:\nabhat (U\Phi)\right)dx
\\&&\hphantom{mmm}-\int_\om\!\!\left(
  \CC_\ep\nabhat\vepij:\nabhat(\uep\Phi)-\CC_0\nabhat U:\nabhat (\vij\Phi)\right)dx\\
  &&\hphantom{mmm}-\int_\om\!\!\left(\CC_0\nabhat U:(\vepij\otimes\nabla\Phi)-\CC_\ep\nabhat \vepij:(U\otimes\nabla \Phi)\right)dx\\
  &&\hphantom{mmm} -\int_\om\!\!\left(
  \CC_0\nabhat\vij:(\uep\otimes\nabla\Phi)-\CC_\ep\nabhat\uep:(\vij\otimes\nabla\Phi)\right)dx\\
&&\hphantom{mm}=\int_\om\!\!\left(\CC_\ep\nabhat \uep:\nabhat\vepij \Phi-\CC_0\nabhat \vij:\nabhat U\Phi\right)dx\\
&&\hphantom{mmm}-\int_\om\!\!\left(
  \CC_\ep\nabhat\vepij:\nabhat\uep\Phi-\CC_0\nabhat U:\nabhat \vij\Phi\right)dx\\
&&\hphantom{mmm}-\int_\om\!\!\left(\CC_\ep\nabhat \uep:(\vepij\otimes \nabla\Phi)-\CC_0\nabhat \vij:( U\otimes \nabla\Phi)\right)dx\\
&&\hphantom{mmm}-\int_\om\!\!\left(
  \CC_\ep\nabhat\vepij:(\uep\otimes \nabla\Phi)-\CC_0\nabhat U:(\vij\otimes \nabla\Phi)\right)dx\\  &&\hphantom{mmm}-\int_\om\!\!\left(\CC_0\nabhat U:(\vepij\otimes\nabla\Phi)-\CC_\ep\nabhat \vepij:(U\otimes\nabla \Phi)\right)dx\\
  &&\hphantom{mmm}-\int_\om\!\!\left(
  \CC_0\nabhat\vij:(\uep\otimes\nabla\Phi)-\CC_\ep\nabhat\uep:(\vij\otimes\nabla\Phi)\right)dx
\end{eqnarray*}
By symmetry of the elasticity tensors, the first two lines of last equality  give zero. By rearranging
 the various integral in a suitable way we get,
\begin{eqnarray*}
&&\int_\omep\!\!(\CC_0-\CC_1)\nabhat U:\nabhat \vepij\Phi\,dx-
  \int_\omep\!\!(\CC_0-\CC_1)\nabhat\uep:\nabhat \vij\Phi\,dx\\
&&\hphantom{mm}=\int_\om\!\!\CC_\ep\left(\nabhat \uep -\nabhat U\right):((\vepij-\vij)\otimes \nabla\Phi)dx
\\&&\hphantom{mmm}-\int_\om\!\!\CC_\ep\left(\nabhat \vij-\nabhat\vepij\right):(( U-\uep)\otimes \nabla\Phi)dx\\
&&\hphantom{mmm}+\int_\omep\!\!\left(\CC_1-\CC_0\right)\nabhat U:((\vepij-\vij)\otimes \nabla\Phi)dx
\\&&\hphantom{mmm}-\int_\omep\!\!\left(\CC_1-\CC_0\right)\nabhat \vij:(( U-\uep)\otimes \nabla\Phi)dx
\end{eqnarray*}
By Lemma \ref{lemma1} and by regularity of functions $U$ and $v^{ij}$ in $K_0$, we get (\ref{contofinale}).

\end{proof}

\subsection{End of the proof of Theorem \ref{mainteo}}
Let $\omepn$ as above. By the definition of the Neumann matrix it is easy to see that
\begin{equation}\label{loc}
(\uepn -U)(y)=\int_{\omepn}(\CC_1-\CC_0)\nabhat\uepn :\nabhat N(\cdot,y)dx.
\end{equation}
(See \cite{BerettaFrancini} for details).

Let $K_0\subset\om$ the compact set introduced in (\ref{7}). Given $y\in\der\om$ it is possible to find $\Psi_y\in C^0(\overline{\om})$ a matrix valued function such that $\Psi_y(x)=\nabla_x N(x,y)$ for $x\in K_0$.

Using the previous lemma we get
\begin{eqnarray*}
  (\uepn -U)(y) &=&|\omepn|\int_\om\frac{1}{|\omepn|}\chi_{\omepn}(\CC_1-\CC_0)
  \nabhat\uepn :\Psi_y\,dx\\
   &=&|\omepn|\int_\om \MM\nabhat U:\Psi_yd\mu+o(|\omep|)\\
   &=&|\omepn|\int_\om \MM\nabhat U:\nabhat N(\cdot,y)\,d\mu+o(|\omep|).
\end{eqnarray*}


\section{Properties of the elastic moment tensor} \label{sec.properties}
In this section we prove few basic properties of the elastic moment tensor $\MM$:
\begin{prop}\label{proppol}
The elastic moment tensor $\MM$ has the same symmetry properties of the elasticity tensors $\CC_0$ and $\CC_1$, that is
\[\MM_{ijkl}=\MM_{klij}=\MM_{jikl}, \qquad \mu\mbox{-a.e.}\]
for any choice of indices $i,j,k,l$ between $1$ and $d$.

Moreover, for any symmetric matrix $E$,
\begin{equation}\label{tensorprop}
\CC_0\CC_1^{-1}(\CC_1-\CC_0)E:E\leq \MM E:E\leq (\CC_1-\CC_0)E:E\qquad \mu\mbox{-a.e.}.
\end{equation}
\end{prop}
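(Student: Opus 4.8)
The plan is to establish the two assertions separately, both by exploiting the variational characterization of $\MM$ coming from the weak$^*$ limit in \eqref{conv2} together with the defining equations \eqref{12} for $v_\ep^{ij}$.

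For the symmetry properties, the first observation is that, by the construction in Section~\ref{sec3}, for any constant symmetric matrix $E = \sum_{ij} E_{ij}\,\nabhat v^{ij}$ the combination $v_\ep^E := \sum_{ij} E_{ij}\, v_\ep^{ij}$ solves \eqref{12} with $\nabhat v^{ij}$ replaced by $E$, and the tensor $\MM$ acts on $E$ by $\MM E\, d\mu = \lim_n |\omepn|^{-1}\chi_{\omepn}(\CC_1-\CC_0)\nabhat v_{\ep_n}^E\, dx$. The identity $\MM_{ijkl} = \MM_{jikl}$ is immediate because $(\CC_1-\CC_0)$ already enjoys the minor symmetry in $(i,j)$, so the limiting measure does too. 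For the major symmetry $\MM_{ijkl} = \MM_{klij}$, I would test: for two constant symmetric matrices $E, G$, consider $\int_\om \MM E : G\, d\mu$ against $\int_\om \MM G : E\, d\mu$. Using the weak formulation \eqref{solint2} (with the roles of the two index pairs) applied to $v_\ep^E$ and $v_\ep^G$, one gets
\[
\int_\omep (\CC_1-\CC_0)\nabhat v_\ep^E : \nabhat v^G\, dx = \int_\om \CC_\ep \nabhat v_\ep^E : \nabhat v_\ep^G\, dx - \int_\om \CC_0 \nabhat v^E : \nabhat v^G\, dx,
\]
and the right-hand side is symmetric in $E \leftrightarrow G$ by the full symmetry of $\CC_\ep$ and $\CC_0$; dividing by $|\omep|$, replacing $\nabhat v^G$ by the constant $G$ up to a term controlled by Lemma~\ref{lemma1} (since $\nabhat v_\ep^G - \nabhat v^G = \nabhat(v_\ep^G - v^G)$ is $O(|\omep|^{1/2})$ in $L^2$, against the $O(|\omep|^{1/2})$ concentration of $\chi_{\omep}(\CC_1-\CC_0)\nabhat v_\ep^E$), and passing to the limit yields $\int_\om \MM E : G\, d\mu = \int_\om \MM G : E\, d\mu$ for all constant symmetric $E, G$, hence the major symmetry $\mu$-a.e.

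For the two-sided bound \eqref{tensorprop}, fix a constant symmetric matrix $E$, write $w_\ep := v_\ep^E - v^E \in \tilde H(\om)$, and note $\nabhat v_\ep^E = E + \nabhat w_\ep$. From \eqref{12} one has, for all $\varphi \in \tilde H(\om)$, $\int_\om \CC_\ep(E + \nabhat w_\ep):\nabhat\varphi = \int_\om \CC_0 E : \nabhat\varphi$; taking $\varphi = w_\ep$ gives the energy identity
\[
\int_\om \CC_\ep \nabhat w_\ep : \nabhat w_\ep\, dx = -\int_\omep (\CC_1-\CC_0) E : \nabhat w_\ep\, dx = -\int_\omep (\CC_1-\CC_0)\nabhat v_\ep^E : \nabhat w_\ep\, dx + \int_\omep (\CC_1-\CC_0) E : E - (\CC_1 - \CC_0) E : \nabhat v_\ep^E\ \text{(rearranged)}.
\]
The cleaner route is the standard composite-energy computation: one shows
\[
\frac{1}{|\omep|}\int_\omep (\CC_1-\CC_0)\nabhat v_\ep^E : E\, dx = \frac{1}{|\omep|}\left( \int_\omep (\CC_1-\CC_0) E : E\, dx - \int_\om \CC_\ep \nabhat w_\ep : \nabhat w_\ep\, dx \right) + o(1),
\]
using \eqref{12} with $\varphi = w_\ep$ to convert the corrector energy, and symmetrically
\[
\frac{1}{|\omep|}\int_\omep (\CC_1-\CC_0)\nabhat v_\ep^E : E\, dx = \frac{1}{|\omep|}\int_\omep \CC_0 \CC_1^{-1}(\CC_1-\CC_0) E : E\, dx + \frac{1}{|\omep|}\int_\om \CC_1^{-1}\big(\CC_1\nabhat v_\ep^E - \CC_0 E\big) : \big(\text{s.p.d. quadratic}\big) + o(1),
\]
i.e. by completing the square in two ways — once in the variable $\nabhat w_\ep$ against the quadratic form $\CC_\ep$, and once in the "stress" variable $\sigma_\ep := \CC_\ep \nabhat v_\ep^E$ against the compliance $\CC_\ep^{-1}$. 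Since $\int_\om \CC_\ep \nabhat w_\ep : \nabhat w_\ep \ge 0$ and the complementary quadratic term is also $\ge 0$ (here one uses $\CC_1 \ge \CC_0$ is \emph{not} assumed, so the correct inequality is the algebraic identity $(\CC_1-\CC_0) - (\CC_1-\CC_0)\CC_1^{-1}(\CC_1-\CC_0) = (\CC_1-\CC_0)\CC_1^{-1}\CC_0$ acting nonnegatively on symmetric matrices, which follows from strong convexity \eqref{strconv},\eqref{4} of both tensors and their full symmetry), one obtains, after dividing by $|\omep|$, testing the weak$^*$ convergence \eqref{conv2} against $\Phi \equiv 1$ on $K_0$, and recalling $\int_\om d\mu = 1$, both inequalities in \eqref{tensorprop} evaluated against the fixed $E$; a density/localization argument (testing against arbitrary $\Phi \ge 0$ in $C^0(\overline\om)$ supported in $K_0$) upgrades this to the pointwise $\mu$-a.e. statement.

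The main obstacle I anticipate is the algebra of the two "energy" inequalities: one must be careful that neither $\CC_1 - \CC_0$ nor $\CC_0 - \CC_1$ is sign-definite, so the bounds are \emph{not} obtained by a naive comparison of energies but by the two dual completions of the square — the primal one in the displacement corrector $w_\ep$ giving the upper bound $\MM E : E \le (\CC_1-\CC_0) E : E$, and the dual (complementary-energy) one in the stress giving the lower bound $\MM E : E \ge \CC_0 \CC_1^{-1}(\CC_1-\CC_0) E : E$. Verifying that the complementary-energy computation closes requires rewriting \eqref{12} in mixed form and checking that the corrector stress $\CC_\ep \nabhat w_\ep$ is divergence-free with the right boundary data so that its $L^2$ pairing with gradients vanishes; this is exactly where the normalization in the definition of $\tilde H(\om)$ and the full symmetry \eqref{symm} are used. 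The passage from the "integrated against $\Phi\equiv 1$" inequality to the $\mu$-a.e. pointwise inequality is routine once \eqref{conv2} and Lemma~\ref{lemma1} are in hand, since all error terms are $o(|\omep|)$ uniformly.
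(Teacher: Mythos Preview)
Your treatment of the symmetry and the upper bound is essentially the paper's: the major symmetry comes from the fact that $\int_{\omep}(\CC_1-\CC_0)\nabhat v_\ep^E:G\,dx$ can be rewritten, via the weak formulations of~\eqref{12} for both $E$ and $G$, as an expression symmetric under $E\leftrightarrow G$ (the paper gets this as a special case of the identity proved in Lemma~\ref{lemma2}); the upper bound comes from the energy identity
\[
\int_{\omep}(\CC_1-\CC_0)\nabhat v_\ep : E\,\phi\,dx=\int_{\omep}(\CC_1-\CC_0)E:E\,\phi\,dx-\int_{\om}\CC_\ep\nabhat w_\ep:\nabhat w_\ep\,\phi\,dx+o(|\omep|),
\]
dropping the nonnegative corrector energy. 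This is exactly~\eqref{MEE}.

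For the lower bound, however, your route diverges from the paper and is where the proposal is incomplete. The paper does \emph{not} use a complementary-energy (stress-based) variational principle. Instead it bounds the corrector energy from above by a Cauchy--Schwarz inequality with respect to the $\CC_1$ inner product: writing
\[
\int_{\om}\CC_\ep\nabhat w_\ep:\nabhat w_\ep\,\phi=\int_{\omep}\CC_1\nabhat w_\ep:\CC_1^{-1}(\CC_0-\CC_1)E\,\phi+o(|\omep|)
\]
and applying Cauchy--Schwarz in $\CC_1$ on $\omep$ (where $\CC_\ep=\CC_1$) yields
$\int_{\om}\CC_\ep\nabhat w_\ep:\nabhat w_\ep\,\phi\le\int_{\omep}\CC_1^{-1}(\CC_1-\CC_0)E:(\CC_1-\CC_0)E\,\phi+o(|\omep|)$, and the algebra $(\CC_1-\CC_0)-(\CC_1-\CC_0)\CC_1^{-1}(\CC_1-\CC_0)=\CC_0\CC_1^{-1}(\CC_1-\CC_0)$ then gives exactly the lower bound. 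Your parenthetical claim that $(\CC_1-\CC_0)\CC_1^{-1}\CC_0=\CC_0-\CC_0\CC_1^{-1}\CC_0$ ``acts nonnegatively on symmetric matrices'' from strong convexity alone is false: this operator is $\ge0$ precisely when $\CC_1\ge\CC_0$, which is not assumed. The identity is correct and is the final algebraic step, but it is not a positivity statement---the positivity in the argument sits entirely in the Cauchy--Schwarz step, not in this tensor. Your stress-based sketch may be salvageable, but as written it does not close.

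One further point: you propose to prove the inequalities first against $\Phi\equiv1$ and then ``localize''. The paper carries a general nonnegative $\phi\in C^\infty$ through the whole computation, and the extra terms coming from $\nabla\phi$ (the quantity $R$ in the paper) are shown to be $o(|\omep|)$ using the $L^2$ estimate~\eqref{energyl2} of Lemma~\ref{lemma1}, not just the $H^1$ estimate. This is not automatic and is where the refined $L^2$ bound earns its keep; your proposal glosses over it.
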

{\it Proof.}
Firstly, we show that $\MM$ enjoys the same symmetry as the elastic tensors $\CC_1$ and $\CC_0$.
To this end, we recall the following equality,
which was obtained in the proof of lemma~\ref{lemma2}.
\[
\frac{1}{|\omep|}\int_\omep (\CC_1-\CC_0)\nabhat U :\nabhat \vepij\Phi  dx=
\frac{1}{|\omep|}\int_\omep (\CC_1-\CC_0)\nabhat \uep :\nabhat \vij\Phi dx+o(1).
\]
Substituting $U$ and $\uep$ for $\vhk$ and $\vephk$ respectively, we see that
\[
\frac{1}{|\omep|}\int_\omep (\CC_1-\CC_0)\nabhat \vhk :\nabhat \vepij\Phi dx=
\frac{1}{|\omep|}\int_\omep (\CC_1-\CC_0)\nabhat \vephk :\nabhat \vij\Phi dx+o(1).
\]
Recalling (\ref{gradvij}) and by the symmetry of $\CC_0$ and $\CC_1$,
we get on one hand that
\begin{eqnarray*}
  \frac{1}{|\omep|}\int_\omep(\CC_1-\CC_0)\nabhat\vhk\nabhat\vepij \Phi dx&=&
  \frac{1}{|\omep|}\int_\omep\sum_{lmpq}(\CC_1-\CC_0)_{lmpq}\delta_{hp}\delta_{kq}
  \frac{\der(\vepij)_m}{\der x_l} \Phi dx \\
   &=& \frac{1}{|\omep|}\int_\omep\sum_{lm}(\CC_1-\CC_0)_{lmhk}\frac{\der(\vepij)_m}{\der x_l} \Phi dx\\
   \rightarrow&&\int_\om \MM_{ijhk}\Phi d\mu.
\end{eqnarray*}
On the other hand
\begin{eqnarray*}
  \frac{1}{|\omep|}\int_\omep(\CC_1-\CC_0)\nabhat\vephk\nabhat\vij\Phi dx&=&
  \frac{1}{|\omep|}\int_\omep\sum_{lmpq}(\CC_1-\CC_0)_{lmpq}
  \frac{\der(\vephk)_q}{\der x_p}\delta_{il}\delta_{jm}\Phi dx\\
   &=&  \frac{1}{|\omep|}\int_\omep\sum_{pq}(\CC_1-\CC_0)_{ijpq}
  \frac{\der(\vephk)_q}{\der x_p}\Phi dx\\
  \rightarrow&&\int_\om \MM_{hkij}\Phi d\mu.
\end{eqnarray*}
It follows that
\begin{equation} \label{majorsymm}
\MM_{ijhk}=\MM_{hkij}, \qquad  \mu-\text{a.e.}
\end{equation}
To obtain the minor symmetry, we observe that
\[
    \sum_{lm} |\omepn|\chi_{\omepn}(\CC_1-\CC_0)_{lmpq} \frac{\partial  (v_{\ep_n}^{ij})_q}{\partial x_p}
     = \sum_{lm} |\omepn|\chi_{\omepn}(\CC_1-\CC_0)_{mlpq} \frac{\partial  (v_{\ep_n}^{ij})_q}{\partial x_p},
 \]
 since   $\CC_1-\CC_0$ is a totally symmetric $4$th-order tensor.
 Then, from \eqref{conv2}, it follows that
 \[
     \MM_{ijlm} = \MM_{ijml}, \qquad  \mu-\text{a.e.}
 \]
 All the minor symmetries now follow from \eqref{majorsymm}.

For the proof of (\ref{tensorprop}), we follow \cite{CapdeboscqVogelius_genrep}, where the case of the
scalar conductivity equation  was discussed.

We begin by fixing a constant, symmetric matrix $E=[E_{ij}]$. We set
\begin{equation} \label{Vvepid}
       V = \sum_{ij} E_{ij} v^{ij}, \qquad  v_\ep  = \sum_{ij} E_{ij} \vepij,
\end{equation}
where $v^{ij}$ and $\vepij$ are given in (\ref{vij}) and (\ref{12}),
and observe that $V$ solves
\begin{equation} \label{Veq}
  \begin{cases}
   \dive ( \CC_0 \nabhat V) =\dive(\CC_0  E),
   &  \text{ in } \om, \\
   (\CC_0 \nabhat V) \nu  =
   (\CC_0 E)\nu,
   & \text{ on }  \partial \om,
  \end{cases}
\end{equation}
 and, consequently, $v^\ep$ solves
\begin{equation} \label{Vepseq}
  \begin{cases}
   \dive ( \CC_\epsilon \nabhat v_\epsilon) =\dive(\CC_0  E),
   &  \text{ in } \Omega, \\
   (\CC_\epsilon \nabhat v_\epsilon) \nu  =
   (\CC_0 E) \nu,
   & \text{ on }  \partial \om.
  \end{cases}
\end{equation}

We next recall that $\MM$ is obtained as the weak limit
\eqref{conv2}. Therefore, given any function $\Phi \in
C^0(\Bar\Omega)$, we have
\[
    \int_{\om}  \MM_{ijkl} \Phi \, d\mu =
    \frac{1}{|\omega_{\ep}|}  \int_{\omega_{\ep}} \left [
      (\CC_1-\CC_0)  \nabhat \vepij\right]_{kl} \Phi(x) \, dx \ +
    \ o(1),
\]
where $\epsilon$ is an element of the sequence $\{\epsilon_n\}$, and
square brackets indicate components.
We choose $\Phi (x)= E_{ij}\, E_{kl}\phi\chi_{\Bar\Omega}$, where $\phi$ is a positive smooth function, and sum over repeated indices:
\[
  \begin{aligned}
   \int_{\om} \MM E:E \phi d\mu &= \sum_{ijkl} \int_{\om} E_{ij} \,
   \mathbb{M}_{ijkl} \, E_{kl}\phi dx \\
 & = \frac{1}{|\omega_{\ep}|}
 \int_{\omega_{\ep}} \sum_{ijkl} E_{ij} \left [
      (\CC_1-\CC_0) \cdot \nabhat v_{\epsilon}^{ij}\right]_{kl}
    E_{kl}\phi dx  \ + \ o(1).
  \end{aligned}
\]
Recall that $\nabla v^{ij} = \widehat{(e_i\otimes e_j)}$, hence
 from \eqref{Vvepid}, using that $E$ is symmetric, we have
\[
      \nabhat V = \sum_{ij} E_{ij}\, e_i\otimes e_j = E,
\]
so that:
\begin{eqnarray}\label{quattro.cinque}
\int_{\om} \MM E:E \phi\, d\mu  &= &\frac{1}{|\omega_{\ep}|}
 \int_{\omega_{\ep}} \sum_{klpq} (\CC_1-\CC_0)_{klpq} \left( \sum_{ij} E_{ij} \partial_p
        [v_{\epsilon}^{ij}]_q \right):
     [\nabhat V]_{kl} \phi \,dx  \nonumber \\&&+  o(1)  \nonumber \\
&=& \frac{1}{|\omega_{\ep}|} \int_{\omega_\ep}
      (\CC_1-\CC_0)\,
        \nabhat v_{\epsilon} : \nabhat V \phi\, dx   + \ o(1)\nonumber\\
&=&\frac{1}{|\omega_{\ep}|}\int_{\omega_\ep}
      (\CC_1-\CC_0)\,
        \nabhat V : \nabhat V \phi\, dx\nonumber\\
&&+\frac{1}{|\omega_{\ep}|}\int_{\omega_\ep}
      (\CC_1-\CC_0)\,\nabhat( v_{\epsilon}-V) : \nabhat V \phi \,dx + \ o(1)
  \label{Mconvex.1}
\end{eqnarray}
Let us now notice that
\[
  \int_{\omega_\ep}
      (\CC_1-\CC_0)\,\nabhat( v_{\epsilon}-V) : \nabhat V \phi \,dx  = -\int_\om\CC_\ep\nabhat(v_\ep-V):\nabhat(v_\ep-V)\phi\,dx+R
      \]
where, by (\ref{Veq}), (\ref{Vepseq}) and by Lemma \ref{lemma1},
\begin{eqnarray}\label{R}
R&:=&\int_\omep(\CC_1-\CC_0)\,\nabhat( v_{\epsilon}-V) \, :\, \nabhat V \phi \,dx + \int_\om\CC_\ep\nabhat(v_\ep-V):\nabhat(v_\ep-V)\phi\,dx\nonumber\\
     &=&\int_\om(\CC_\ep-\CC_0)\,\nabhat( v_{\epsilon}-V) \, :\, \nabhat V \phi \,dx + \int_\om\CC_\ep\nabhat(v_\ep-V):\nabhat(v_\ep-V)\phi\,dx\nonumber\\
     &=&\int_\om \CC_\ep\nabhat v_\ep:\nabhat(v_\ep-V)\phi\,dx-\int_\om\CC_0\nabhat V:\nabhat(v_\ep-V)\phi\,dx\nonumber\\
     &=&\int_\om \CC_\ep\nabhat v_\ep:\nabhat((v_\ep-V)\phi)\,dx-\int_\om\CC_0\nabhat V:\nabhat((v_\ep-V)\phi)\,dx\nonumber\\
     &&-\int_\om \CC_\ep\nabhat v_\ep:((v_\ep-V)\otimes\nabla\phi\,dx+\int_\om\CC_0\nabhat V:((v_\ep-V)\otimes\nabla\phi\,dx\nonumber\\
     &=&-\int_\om \CC_0\nabhat( v_\ep-V):((v_\ep-V)\otimes\nabla\phi\,dx-\int_\omep\CC_1\nabhat v_\ep:((v_\ep-V)\otimes\nabla\phi\,dx\nonumber\\
     &=&o(|\omep|)
\end{eqnarray}
By inserting the above relation in (\ref{quattro.cinque}) we get
\begin{eqnarray}\label{MEE}
   \int_{\om} \MM E:E \phi \,d\mu &=&\frac{1}{|\omega_{\ep}|}\int_{\omega_\ep}
      (\CC_1-\CC_0)\,
        \nabhat V \, :\, \nabhat V \phi\, dx\nonumber
        \\&-&\int_\om\CC_\ep\nabhat(v_\ep-V):\nabhat(v_\ep-V)\phi\,dx+o(1).
\end{eqnarray}
Since $\CC_\ep$ is strongly convex, and $\phi>0$,
\[
\int_\om\CC_\ep\nabhat(v_\ep-V):\nabhat(v_\ep-V)\phi\,dx\geq 0
\]
and, hence,
\[\int_{\om} \MM E:E \phi \,d\mu \leq\frac{1}{|\omega_{\ep}|}\int_{\omega_\ep}
      (\CC_1-\CC_0)\,
        \nabhat V \, :\, \nabhat V \phi\, dx+o(1)\]
      Since the left-hand side does not depend on $\ep$ we let $\ep\to 0$ and get
 \begin{equation}\label{sopra}
 \int_{\om} \MM E:E \phi\, d\mu \leq\int_{\om}
      (\CC_1-\CC_0)\,
        \nabhat V  : \nabhat V \phi\, d\mu=\int_{\om}
      (\CC_1-\CC_0)\,
        E: E\phi\, d\mu.
 \end{equation}

 Now, by (\ref{R}) and by the fact that $\CC_1$ is strongly convex, we have that
 \begin{eqnarray}\label{dislemma}
&& \int_\om\CC_\ep\nabhat(v_\ep-V):\nabhat(v_\ep-V)\phi\,dx\nonumber\\&&=\int_\omep(\CC_0-\CC_1)\,\nabhat( v_{\epsilon}-V) \, :\, \nabhat V \phi \,dx
 +o(|\omep|)\nonumber\\
 &&=\int_\omep\CC_1\,\nabhat( v_{\epsilon}-V) \, :\CC_1^{-1}(\CC_0-\CC_1)\, \nabhat V \phi \,dx
 +o(|\omep|)\nonumber\\
 &&\leq\left(\int_\omep\CC_1\nabhat( v_{\epsilon}-V):\nabhat( v_{\epsilon}-V)\phi\,dx\right)^{1/2}\cdot\nonumber\\
&& \left(\int_\omep(\CC_0-\CC_1)\nabhat V:\CC_1^{-1}(\CC_0-\CC_1)\nabhat V\phi\,dx\right)^{1/2}+o(|\omep|)
 \end{eqnarray}
 from which it follows that
 \begin{equation}\label{dise}
   \int_\om\CC_\ep\nabhat(v_\ep-V):\nabhat(v_\ep-V)\phi\,dx\leq \int_\omep\CC_1^{-1}(\CC_0-\CC_1)\nabhat V:(\CC_0-\CC_1)\nabhat V\phi\,dx+o(|\omep|).
 \end{equation}
 By inserting (\ref{dise}) into (\ref{MEE}) we get
 \begin{eqnarray*}
 \int_{\om} \MM E:E \phi\, d\mu &\geq& \frac{1}{|\omega_{\ep}|}\int_{\omega_\ep}
      (\CC_1-\CC_0)\,
        \nabhat V  : \nabhat V \phi\, dx \\&&-\frac{1}{|\omega_{\ep}|}\int_\omep\CC_1^{-1}(\CC_0-\CC_1)\nabhat V:(\CC_0-\CC_1)\nabhat V\phi\,dx+o(1)
        \\&&=\frac{1}{|\omega_{\ep}|}\int_\omep \CC_1^{-1}(\CC_1-\CC_0)\nabhat V:\CC_0\nabhat V dx +o(1).
        \end{eqnarray*}
 By letting $\ep\to 0$ we get
 \begin{equation}\label{sotto}\int_{\om} \MM E:E \phi \,d\mu \geq \int_\om \CC_0\CC_1^{-1}(\CC_1-\CC_0)\nabhat V:\nabhat V\phi\, d\mu=
 \int_\om \CC_0\CC_1^{-1}(\CC_1-\CC_0)E:E \phi\,d\mu.
 \end{equation}
 Notice that (\ref{sopra}) and (\ref{sotto}) hold for every positive $\phi$, hence (\ref{tensorprop}) follows.\qed


\section{The case of thin planar inclusions}  \label{sec.thin}

In this section, we specialize to the case of thin inclusions in a
planar domain $\Omega \subset \RR^2$, modeled
as an appropriate neighborhood $\omega_\ep$ of a given simple curve
$\sigz\subset \om$, that is:
\begin{equation} \label{thinomegaepdef}
\omep=\left\{x\in\om\;:\;d(x,\sigz)<\ep\right\}.
\end{equation}

We impose the following conditions on $\sigz$.
We  assume that $\sigz$ is of class $C^3$ and that there exists some $K>0$
such that
\begin{eqnarray}\label{2.1}
\nonumber  d(\sigz,\der\om) &\geq& K^{-1} \\
\|\sigz\|_{C^3} &\leq& K \\
\nonumber K^{-1} \leq \mbox{length}(\sigz)&\leq& K.
\end{eqnarray}
Moreover we assume that for every $x\in\sigz$ there exists two discs $B_1$ and $B_2$ of radius $K^{-1}$, such that
$$
\overline{B}_1\cap\overline{B}_2=\overline{B}_1\cap\sigz=\overline{B}_2\cap\sigz=\{x\}.
$$
The latter assumption guarantees that different parts of $\sigz$ do
not get too close, so that $\omep$ does not self-intersect for small
$\ep$.
We refer to $\sigz$ as the support of $\omega_\ep$.

Let us fix an orthonormal system $(n,\tau)$ on $\sigz$ such that $n$
is a unit normal vector field to the curve and $\tau$ is a unit
tangent vector field. If $\sigz$ is a closed curve, then we
take $n$ to point in the outward direction of the domain it
encloses.

We present a different derivation of  the small volume
asymptotic formula \eqref{9} for the displacement at the boundary in this
case, which makes more explicit the measure and elastic moment tensor $\MM$
that appear in \eqref{9}.

The main result of this section is the following theorem, which is a counterpart to Theorem \ref{mainteo}.

\begin{teo}\label{teo2.1}
Let $\om\subset\RR^2$ be a bounded smooth domain and let
$\sigz\subset\subset\om$ be a simple curve satisfying (\ref{2.1}).
Let  $\uep$ and
$U$ be the solutions to (\ref{1}) and (\ref{5})
respectively.
For every $x\in\sigz$, there exists a fourth order
elastic tensor field $\tilde{\MM}(x)$ such that, for $y\in\der\om$
\begin{equation}\label{2.11}
(\uep-U)(y)=2\ep\int_\sigz
\tilde{\MM}(x)\nabuzero(x):\nabhat N(x,y)\,d\sigz(x)+o(\ep).
\end{equation}
The term $o(\ep)$ is bounded by
$C\ep^{1+\theta}\|\psi\|_{H^{-1/2}(\der\om)}$, for some $0<\theta<1$ and
$C$ depending only on $\theta$, $\om$, $\alpha_0$, $\beta_0$ and
$K$.
\end{teo}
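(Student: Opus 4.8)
The plan is to reduce the statement to Theorem~\ref{mainteo} applied to the particular family $\omep$ defined in \eqref{thinomegaepdef}, and then to identify the limiting measure $\mu$ and the tensor $\MM$ explicitly. First I would verify that, since $\sigz$ is a $C^3$ simple curve satisfying the uniform interior-disc condition, the tubular neighborhood $\omep$ is well defined, does not self-intersect for small $\ep$, and satisfies $|\omep| = 2\ep\,\mathrm{length}(\sigz) + O(\ep^2)$, with $d(\omep,\der\om)\geq d_0>0$. Using the coarea formula (or the explicit parametrization $x = s + t\,n(s)$, $|t|<\ep$, along $\sigz$), one checks that
\[
\frac{1}{|\omep|}\chi_{\omep}\,dx \;\rightharpoonup\; \frac{1}{\mathrm{length}(\sigz)}\,d\sigz
\]
in the weak$^*$ topology of $(C(\overline\om))'$, so the hypothesis \eqref{def_mu} of Theorem~\ref{mainteo} holds with $\mu = d\sigz/\mathrm{length}(\sigz)$. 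This already yields an expansion of the form \eqref{9}, i.e. $(\uep-U)(y) = |\omep|\int_\om \MM\nabhat U:\nabhat N\,d\mu + o(|\omep|)$, and substituting $|\omep| = 2\ep\,\mathrm{length}(\sigz)+O(\ep^2)$ gives \eqref{2.11} with $\tilde\MM = \MM$ up to the normalization by $\mathrm{length}(\sigz)$ — provided one can upgrade the $o(|\omep|)$ to the quantitative $C\ep^{1+\theta}\|\psi\|_{H^{-1/2}}$ bound.

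The quantitative remainder is where the argument needs more than a black-box appeal to Theorem~\ref{mainteo}. Here I would redo the representation \eqref{loc}, $(\uep-U)(y) = \int_{\omep}(\CC_1-\CC_0)\nabhat\uep:\nabhat N(\cdot,y)\,dx$, and estimate the difference between $\nabhat\uep$ on $\omep$ and the expression predicted by $\tilde\MM\nabhat U$. The tool for this is the uniform H\"older regularity of $\uep$ (independent of $\ep$) across the interface, which for thin inclusions follows from the Li--Nirenberg / Li--Vogelius type estimates together with Meyers' higher-integrability theorem for the elasticity system — both recalled in the appendix. Concretely: on the thin strip $\omep$, $\nabhat\uep(x)$ is, to leading order, a fixed linear function of the ``outer'' strain $\nabhat U(x_0)$ at the nearest point $x_0\in\sigz$, where the proportionality is given by the transmission conditions across $\sigz$; that linear map is exactly $\tilde\MM(x_0)$ via the relation analogous to \eqref{jump_iso}. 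Integrating over the $\ep$-neighborhood, using $\mathrm{length}(\sigz)$-worth of fibers of width $2\ep$, and controlling the error by the oscillation of $\nabhat U$ and $\nabhat N$ over balls of radius $\sim\ep$ (which is $O(\ep^\alpha)$ by interior Schauder estimates) gives the $O(\ep^{1+\theta})$ remainder, with $\theta$ determined by the H\"older exponent and the Meyers exponent. The constant's dependence on $\om$, $K$, $\lambda_0$ (the $\alpha_0,\beta_0$ in the statement) is traced through these regularity constants.

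The genuine obstacle — and the reason this section exists as a separate argument rather than a corollary — is establishing \emph{existence} of a single $\ep$-independent tensor $\tilde\MM$ satisfying the jump relation \eqref{jump_iso} in the anisotropic case; the direct derivation from the transmission conditions that works in the isotropic setting of \cite{BerettaFrancini} fails here. As flagged in the introduction, the resolution is to borrow the Francfort--Murat \cite{FrancfortMurat} construction of effective tensors of laminated two-phase elastic composites and to realize $\tilde\MM$ as the limit, as the volume fraction of the $\CC_1$-phase tends to zero, of the effective tensor of a rank-one laminate with lamination direction $n(x)$, phases $\CC_0,\CC_1$. One must then show this abstractly-defined $\tilde\MM$ (i) is well defined and fully symmetric, (ii) satisfies the bounds \eqref{tensorprop}, and (iii) produces exactly the same boundary expansion as the $\MM$ coming from Theorem~\ref{mainteo} — the last point being the reconciliation promised in the introduction, proved by testing both representation formulas against the same Neumann function and using the uniqueness part of the argument. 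I expect steps (i)--(iii), and in particular matching the lamination-theoretic $\tilde\MM$ with the weak-limit $\MM$, to be the main technical burden; the measure-theoretic and regularity parts are comparatively routine.
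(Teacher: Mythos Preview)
Your proposal has the right ingredients --- Li--Nirenberg uniform regularity, Caccioppoli/Meyers for the outer gradient estimate, and Francfort--Murat for the existence of $\tilde\MM$ --- but the architecture is inverted relative to the paper. The paper does \emph{not} prove Theorem~\ref{teo2.1} by invoking Theorem~\ref{mainteo} and then identifying $\mu$ and $\MM$: that route only gives a subsequential expansion with a non-quantitative $o(|\omep|)$ remainder, so your ``upgrade'' step would have to carry the whole proof anyway, and your Part~A is dead weight. Instead the paper gives a direct, self-contained four-step argument: (i) start from the representation~\eqref{loc}; (ii) excise $\ep^\beta$-neighborhoods of the endpoints of $\sigz$ and, via the Li--Nirenberg $C^\alpha$ estimates, replace the strip integral by a curve integral over $\sigma_\ep' = \{x+\ep n(x)\}$, with error $O(\ep^{1+\alpha-\beta(1+\alpha)})$; (iii) apply the pointwise jump relation $(\CC_1-\CC_0)\nabhat\uepi=\tilde\MM\,\nabhat\uepe$ on $\sigma_\ep'$; (iv) prove $\|\nabla\uepe-\nabla U\|_{L^\infty(\sigma_\ep')}\leq C\ep^\gamma$ by differentiating the equation for $\uep-U$ in $\om\setminus\omep$, applying Caccioppoli at an intermediate scale $d$, bootstrapping to $L^{2+\eta}$ via Meyers, using Sobolev embedding (this is where $d=2$ matters), and finally optimizing $d$ against $\ep$. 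The consistency of this $\tilde\MM$ with the abstract $\MM d\mu$ of Theorem~\ref{mainteo} is established \emph{after} the proof, as a separate remark, not as part of the argument.

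On the Francfort--Murat input: the paper does not realize $\tilde\MM$ as a dilute-limit of effective laminate tensors as you suggest (though that alternative works and appears in an abandoned draft passage). It borrows only the algebraic device: form the acoustic operator $(q^{-1}\zeta)\cdot\xi = \CC_1(\zeta\otimes n):(\xi\otimes n)$, observe it is invertible by strong convexity of $\CC_1$, write the tangential transmission condition as $e^e=e^i+\delta\otimes n+n\otimes\delta$, and use the normal transmission condition to solve explicitly $\delta=\tfrac12 q\bigl((\CC_0-\CC_1)e^e\,n\bigr)$. This yields the closed formula
\[
\tilde\MM h=(\CC_0-\CC_1)h+(\CC_0-\CC_1)\bigl(q((\CC_0-\CC_1)h\,n)\otimes n\bigr),
\]
which is continuous in $x$ and manifestly $\ep$-independent. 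This is shorter and more explicit than the homogenization-limit route you sketched, and it sidesteps your items (i)--(iii) entirely.
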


\subsection{Proof of Theorem \ref{teo2.1}}\label{new4}
The proof of the theorem closely follows the proof of the corresponding
result in the isotropic case (see \cite{BerettaFrancini}).
We only detail those steps, where the proof differs from that case.
In the following we set $\uepi={u_{\ep}}_{|_\omep}$ and  $\uepe={u_{\ep}}_{|_{\om\setminus\omep}}$.
We simply use $\uep$ when no confusion can occur.
Firstly, we write $(\uep-U)_{|_{\der\om}}$ in terms of an integral over $\omep$
of the product of $\nabhat u_\ep^i$ and $\nabhat N$.
Secondly, using some regularity estimates for solutions to the elastic system in a laminar domain due to Li and Nirenberg~\cite{LiNirenberg},
we approximate this integral by an integral over a portion of $\der\omep$,
which we rewrite, in a third step, using the transmission conditions and a
tensor $\tilde{\MM}$ satisfying~(\ref{4.9}).
The existence of $\tilde{\MM}$ is proved later in Subsection \ref{ssec.poltensor}.
Finally, taking limits in the resulting expression as $\ep \to 0$ and using
fine regularity estimates for $\uep$ proves the theorem.

{\bf First step.}

We recall  (see (\ref{loc})) that, for $y\in\der\om$
\begin{equation}\label{4.1}
    (\uep-U)(y)=\int_{\omep}\left(\CC_1-\CC_0\right)\nabhat\uepi:\nabhat
    N(\cdot,y)\,dx.
\end{equation}
%
{\bf Second step.}
Let $\beta$ be a constant, $0<\beta<1$, and set
$$
\omeprime=\left\{x+t\, n(x)\,:\, x\in\sigz,\,
d(x,\der\sigz)>\ep^\beta, \,t\in(-\ep,\ep)\right\}.
$$
Notice that if $\sigz$ is a closed simple curve, then $\omeprime=\omep$.


By Theorem 2.1, chapter 2 in \cite{Campanato} combined with Sobolev Embedding Theorem, we have that $\|\nabla U\|_{L^\infty(\omep)}$
and $\|\nabla N(\cdot,y)\|_{L^\infty(\omep)}$ (for  $y\in\der\om$) are bounded uniformly in
$\ep$. Using this fact together with the energy estimate (\ref{energygrad}),
one can easily show as in~\cite{BerettaFrancini} that
%
\begin{equation}\label{omegaprime}
\int_\omep \left(\CC_1-\CC_0\right)\nabhat\uepi:\nabhat
N(\cdot,y)\,dx=\int_\omeprime
\left(\CC_1-\CC_0\right)\nabhat\uepi:\nabhat N(\cdot,y)\,dx+
O(\ep^{1+\beta/2}).
\end{equation}


%
%
Let $\sigma^\prime_\ep$ denote the curve
$$
\sigma^\prime_\ep=\left\{x+\ep\, n(x)\,:\, x\in\sigz,\,\,
d(x,\der\sigz)>\ep^\beta\right\}.
$$
%
%
%
A crucial ingredient, at this point, is a $C^\alpha$ regularity estimates for the gradient of solutions to laminated systems due to Li and Nirenberg (see \cite{LiNirenberg}). Using this estimate and proceeding as in \cite{BerettaFrancini},
%
we can approximate the values of
$\nabla\uep$ in $\omep^\prime$ by its values on
$\sigma_\ep^\prime$, so that
%
\begin{eqnarray}\label{4.8}
&&\!\!\!\!\int_\omeprime\left(\CC_1-\CC_0\right)\nabhat\uepi(x):\nabhat
N(x,y)\,dx\nonumber\\
&&\hphantom{mm}=2\ep\int_{\sigma_\ep^\prime}
\left(\CC_1-\CC_0\right)\nabhat\uepi:\nabhat N(\cdot,y)+
O(\ep^{1+\alpha-\beta(1+\alpha)}),
\end{eqnarray}
for $\beta<\alpha(1+\alpha)^{-1}$.

{\bf Third step.}

From the results of the section \ref{ssec.poltensor}, for every $x\in\sigma_\ep^\prime$, there exists a fourth-order, symmetric tensor $\tilde{\MM}(x)$, independent of $\epsilon$,  such that
\begin{equation}\label{4.9}
\left(\CC_1(x)-\CC_0(x)\right)\nabhat\uepi(x) =\tilde{\MM}(x)\nabhat\uepe(x)
\end{equation}
Inserting (\ref{4.9}) into (\ref{4.8}), we get
\begin{equation}\label{4.11}
\int_{\sigma_\ep^\prime}\left(\CC_1-\CC_0\right)\nabhat\uepi:\nabhat
N(\cdot,y)dx=2\ep\int_{\sigma_\ep^\prime}\tilde{\MM}\nabhat\uepe:\nabhat
N(\cdot,y)dx+O(\ep^{1-\alpha-\beta(1+\alpha)}).
\end{equation}

{\bf Fourth Step.}\newline Now we show that
\begin{equation}\label{4.12}
\|\nabla\uepe-\nabla U\|_{L^\infty(\sigma_\ep^\prime)}\leq C
\ep^\gamma\|\psi\|_{H^{-1/2}(\der\om)}
\end{equation}
for some positive $\gamma$.

Once estimate (\ref{4.12}) is proved, then (\ref{4.11}) holds with
$\nabla U$ instead of $\nabla\uepe$ and (\ref{2.11}) follows
immediately by continuity.

In \cite{BerettaFrancini} the proof of estimate (\ref{4.12}) strongly relies on the special features of a homogeneous and isotropic tensor $\CC_0$. In the present case, we use of a Caccioppoli-type inequality proved in the Appendix.
%
%

Let $2\ep<d<d_{0}/2$ and $\om_{d}^{\ep}=\{x\in \Omega : d(x,\partial (\Omega\backslash \omega_{\epsilon})>d\}$.

Since $\uep-\uzero$ is solution to
\[
\dive\left(\CC_0\nabhat(\uep-\uzero)\right)=0  \quad\mbox{in}\quad\om\setminus\omep,
\]
the regularity assumption on $\CC_0$ implies that $\uep-\uzero\in H^2_{loc}(\om\setminus\omep)$
( see \cite[Theorem 2.I chapter 2]{Campanato}).

Let $k\in\{1,2\}$ and let $\phi^k_{\ep}:=\der_{k} (u_{\ep}- u_0)$. The function $\phi^k$ solves
\[
\dive\left(\CC_0\nabhat\phi_\ep^k\right)=F\quad\mbox{in}\quad  \om\setminus\omep.
\]
with
\[
F=-\dive\left((\der_k \CC_0)\nabhat(\uep-\uzero)\right).
\]
By Caccioppoli inequality (Theorem \ref{cacc} for $\bar u=0$) and by (\ref{energygrad}),
we see that
\begin{eqnarray*}
\|\nabla \phi_\ep^k\|^2_{L^2(\om^\ep_{d/2})}&\leq &\frac{C_1}{d^2}\|\phi_\ep^k\|^2_{L^2(\om^\ep_{d/4})}+\|F\|^2_{H^{-1}(\om^\ep_{d/4})}
\\&=& C\left(\frac{1}{d^2}\|\phi_\ep^k\|^2_{L^2(\om^\ep_{d/4})}+\|\nabla(\uep-\uzero)\|^2_{L^2(\om\setminus\omep)}\right)\\
&\leq& C\|\psi\|^2_{H^{-1/2}}\left(d^{-2}+1\right)\ep\leq C\|\psi\|^2_{H^{-1/2}}d^{-2}\ep.\end{eqnarray*}
and, by  (\ref{energygrad}) again
\[
\|\phi_\ep^k\|_{H^1((\om^\ep_{d/2}))}\leq C\|\psi\|_{H^{-1/2}} d^{-1}\sqrt{\ep}
\]

Theorem \ref{Teo3} applied to $\phi^k$ shows that
\[
\|\nabla \phi^k\|_{L^{2+\eta}}(\om^\ep_d)\leq C\left(\|F\|_{H^{-1,2+\eta}(\om^\ep_{d/2})}+d^{\frac{2}{2+\eta}-1}\|\nabla \phi^k\|_{L^2(\om^\ep_{d/2})}\right).
\]
Notice that, by applying Theorem \ref{Teo3} to $\uep-\uzero$ in $\om\setminus\omep$,
\begin{eqnarray*}
\|F\|_{H^{-1,2+\eta}(\om^\ep_{d/2})}&=&\|(\der_k \CC_0)\nabhat(\uep-\uzero)\|_{L^{2+\eta}(\om^\ep_{d/2})}\\\leq
C\|\nabhat(\uep-\uzero)\|_{L^{2+\eta}(\om^\ep_{d/2})}&\leq &\|\nabla(\uep-\uzero)\|_{L^2(\om^\ep_{d/4})}
\end{eqnarray*}
and, hence,
\begin{eqnarray*}
\|\nabla \phi^k\|_{L^{2+\eta}}(\om^\ep_d)&\leq& C\left(\|\nabla(\uep-\uzero)\|_{L^2(\om^\ep_{d/4})}+d^{\frac{2}{2+\eta}-1}\|\nabla \phi^k\|_{L^2(\om^\ep_{d/2})}\right)\\&\leq& C\left(d^{\frac{2}{2+\eta}-2}\right)\sqrt{\ep}.
\end{eqnarray*}

On the other hand, applying Theorem \ref{Teo3} to $\uep-\uzero$ shows that
\[\|\phi_\ep^k\|_{L^{2+\eta}(\om^\ep_d)}\leq C d^{\frac{2}{2+\eta}-1}\sqrt{\ep}.\]
By Sobolev Embedding Theorem, it follows that, for $k=1,2$,
\begin{equation}\label{esit1}
\|\der_k (\uep-\uzero)\|_{L^\infty(\om^\ep_d)} =\|\phi_\ep^k\|_{L^\infty(\om^\ep_d)}\leq C\left(d^{\frac{2}{2+\eta}-2}\right)\sqrt{\ep}
\end{equation}

Now, let $y\in \partial \sigma_{\epsilon}'$ and let $y_d$ denote
the closest point to $y$ in the set $\om_{d}^{\ep}$. From the
gradient estimates for $u_{\ep}$ and $u_0$ (see \cite{LiNirenberg} and \cite[Prop.3.3]{BerettaFrancini}), we have
 \begin{equation}
\label{3.5} |\nabla u^e_\ep (y) - \nabla u_\ep^e(y_d)|  \leq
Cd^\alpha,
\end{equation}
which yields, by (\ref{esit1}),
\[
 |\nabla (u^e_\ep - u_0) (y)| 
 \leq C(d^\alpha+d^{-2+\frac{2}{2+\eta}}\ep^{1/2}) \,.
 \]
Choosing  $d=\ep^{\frac{1}{2(2+\alpha-\frac{2}{2+\eta})}}$, we get
 \begin{equation}\label{stim}
 |\nabla (u^e_\ep - u_0) (y)| \leq C\ep^{\gamma} ,
 \end{equation}
where $\gamma =
\frac{\alpha}{2\left(2+\alpha-\frac{2}{2+\eta}\right)}$, and hence
$$
\|\nabla (u^e_\ep- u_0 ) \|_{L^\infty(\sigma_{\epsilon}')} \leq C\ep^{\gamma}~~.
$$

We conclude exactly as in \cite{BerettaFrancini} by noticing that the tensor $\MM$
is continuous in $\om$ (see next section).\qed
\begin{rem}
 If the elasticity tensor $\CC_0$ is smoother than $C^{1,\alpha}$ in $\om$, by differentiating again the equation for $\uep-\uzero$ in $\om\setminus\omep$ and using again Caccioppoli inequality we obtain a better exponent $\gamma$ in (\ref{stim}). Moreover, if $\CC_0\in C^\infty$ this result can be extended to any dimension for small neighborhoods of regular hypersurfaces  contained in $\om$.
\end{rem}


\subsection{Construction of the elastic moment tensor}
\label{ssec.poltensor}

To establish the asymptotic formula \eqref{2.11},
one seeks to express the term
$(\CC_0 - \CC_1) \nabhat \uepi(x)$ as a linear function of
$\nabhat\uepe(x)$, for $x \in \sigma_\ep^\prime$ (with the notation introduced in the second step of the previous proof). This linear
dependence defines the elastic moment $\tilde{\MM}$ in this context, which
therefore must satisfy:
\begin{eqnarray} \label{Mqlgeq}
(\CC_0 - \CC_1) \nabhat \uepi(x)
&=& \tilde{\MM}(x) \nabhat \uepe(x)
\end{eqnarray}
Let us also recall  that $n(x)$ and $\tau(x)$ denote the normal and tangential directions to
$\sigma_\ep^\prime$ at point $x$.

From the discussion in the previous subsection, it suffices to obtain $\tilde{\MM}$
along $\sigma_\ep^\prime$, away from the vertices of the curve (more precisely,
outside a disk of radius $\epsilon^\beta$ centered at the vertices).

We introduce a coordinate system in $\omega^\prime_\ep$, adapted to the
geometry of the problem, as follows. Let $\tau$ be a unit tangent vector
field along the curve $\sigma'$ that supports~$\omega_\ep$, for
instance the velocity vector field using arclength, and let $n$ be a
vector field normal to $\sigma_0$ so that $\{\tau,n\}$ forms a frame
field on $\sigma_0$.

We employ again the frame field $(\tau,n)$ along $\sigma^\prime_0$, and extend it to $\omega^\prime_\ep$ by setting $\tau(x')$ and $n(x')$ to be constant along
the segment $\{x=x' + h n(x'), \; 0\leq h \leq \ep \}$.
By construction  a global coordinate system in $\omega'_\ep$
is given by $x = x' + h n(x')$, where $x'$
is a point along the curve  $\sigma^\prime_0$  and  $0\leq h<\ep$.
We can extend this coordinate system smoothly to part of the boundary,
that is, for  $h=\ep$.

Since the part of the boundary $\omega_\ep$ above $\sigma'_0$ is given
by the graphs of two smooth functions above the same curve,
by the regularity  results recalled in the previous section (see \cite{LiNirenberg}),
the interior and exterior strain fields are regular
and satisfy the following transmission conditions {\em pointwise\/}  on
 $\sigma_\ep^\prime$:
\begin{subequations}    \label{transcond}
 \begin{equation}
\nabla u^e_\ep(x) \tau(x) = \nabla u^i_\ep(x) \tau(x)
\label{transcondtang}
\end{equation}
\begin{equation}
\CC_0 \nabhat\uepe(x)  n(x) = \CC_1 \nabhat\uepi(x)
n(x).
\label{transcondnorm}
\end{equation}
\end{subequations}

A direct determination of a 4th-order tensor $\tilde{\MM}$  that satisfies
\eqref{Mqlgeq} directly
from the above relations turns out to be rather complicated.
One is led to invert a linear system, the determinant
of which is not easily seen to be non zero when $\CC_0$ and $\CC_1$
are anisotropic tensors, unless additional assumptions are made.

Instead, we follow a construction due to
G. Francfort and F. Murat~\cite{FrancfortMurat}.
In their work, effective elastic properties of laminate composites
are calculated in terms of the geometric parameters of the phase layers
(see also~\cite{Milton}, P. 168). This construction precisely relies on
the transmission conditions \eqref{transcond}.
We fix a point $x'\in \sigma'_0$, and  denote  the values of
the interior and exterior strains respectively by
\[e^i=\nabhat\uepi(x)\quad\mbox{and}\quad e^e=\nabhat\uepe(x).\]

\begin{prop}
There exists a 4-th order tensor $\tilde{\MM}$ such that
\begin{equation}\label{richiesta}(\CC_0-\CC_1)e^i=\tilde{\MM}e^e,\end{equation}
where $\tilde{\MM}$ depends on $\CC_0$, $\CC_1$, $n$, and $\tau$, but
not on $e^e$ and $e^i$.
\end{prop}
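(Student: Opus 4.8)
The plan is to construct $\tilde{\MM}$ by expressing both strains $e^i$ and $e^e$ in terms of a single auxiliary matrix, following the Francfort–Murat lamination computation. The transmission conditions \eqref{transcondtang} and \eqref{transcondnorm} say that $\nabla u^e_\ep - \nabla u^i_\ep$ annihilates $\tau$, hence is of the form $a \otimes n$ for some vector $a = a(x') \in \RR^2$; consequently the difference of the symmetrized gradients satisfies $e^e - e^i = \widehat{a \otimes n} = \tfrac12(a\otimes n + n \otimes a)$. First I would record this rank-one-type structure and treat $a$ as the unknown. Substituting $e^i = e^e - \widehat{a\otimes n}$ into the normal-traction condition \eqref{transcondnorm}, namely $(\CC_0 e^e)n = (\CC_1 e^i) n = (\CC_1 e^e) n - (\CC_1 \widehat{a\otimes n}) n$, yields the linear equation
\[
(\CC_1 \widehat{a \otimes n}) n = (\CC_1 - \CC_0) e^e \, n
\]
for $a$, given $e^e$.

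The key step is to show that the linear map $a \mapsto (\CC_1 \widehat{a\otimes n})\,n$ from $\RR^2$ to $\RR^2$ is invertible. Writing its matrix entries as $(\Lambda(n))_{ik} = \sum_{j,l} (\CC_1)_{ijkl} n_j n_l$ (the acoustic tensor of $\CC_1$ in direction $n$), one checks $\Lambda(n) a \cdot a = \CC_1 \widehat{a\otimes n} : \widehat{a \otimes n} \geq \lambda_0 |\widehat{a\otimes n}|^2 > 0$ for $a \neq 0$ by the strong convexity \eqref{4}, using that $\widehat{a\otimes n} \neq 0$ whenever $a \neq 0$ (since $|n|=1$). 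Thus $\Lambda(n)$ is symmetric positive definite, hence invertible, and $a = \Lambda(n)^{-1} (\CC_1-\CC_0) e^e\, n$ depends linearly on $e^e$ with coefficients built only from $\CC_1$, $\CC_0$ and $n$. Back-substituting gives $e^i = e^e - \widehat{(\Lambda(n)^{-1}(\CC_1-\CC_0)e^e\, n)\otimes n}$, and therefore
\[
(\CC_0 - \CC_1) e^i = (\CC_0-\CC_1)\Bigl( e^e - \widehat{\bigl(\Lambda(n)^{-1}(\CC_1-\CC_0)e^e\, n\bigr)\otimes n}\Bigr) =: \tilde{\MM}(x)\, e^e,
\]
which defines $\tilde{\MM}$ as a fourth-order tensor depending on $\CC_0$, $\CC_1$ and $n$ (the tangent $\tau$ entering only through its role in fixing $n$ as the orthogonal direction). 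I would then note that $\tilde{\MM}$ maps symmetric matrices to symmetric matrices, since $\widehat{a\otimes n}$ is symmetric and $\CC_0-\CC_1$ respects symmetry, and that it is smooth in $x'$ along $\sigma'_0$ because $n$ is.

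The main obstacle is precisely the invertibility of the acoustic tensor $\Lambda(n)$: this is where anisotropy makes a naive direct inversion of the full transmission system opaque, and it is the reason for routing the argument through the single vector unknown $a$ rather than solving for all components of $e^i$ at once. Once strong convexity is used to identify $\Lambda(n) a \cdot a$ with the positive quadratic form $\CC_1 \widehat{a\otimes n}:\widehat{a\otimes n}$, the positive-definiteness is immediate and the rest is bookkeeping. A secondary point to address is consistency with the transmission conditions as stated in terms of $\nabla u$ (not $\nabhat u$): one should verify that $e^i$ so defined indeed arises as $\nabhat u^i_\ep(x)$, i.e. that the reconstructed full gradient $\nabla u^i_\ep = \nabla u^e_\ep - a \otimes n$ is consistent, which follows from the derivation since $a$ was defined via exactly this relation.
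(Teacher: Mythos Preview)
Your proposal is correct and follows essentially the same route as the paper: your acoustic tensor $\Lambda(n)$ is precisely the operator the paper denotes $q^{-1}$ (defined by $(q^{-1}\zeta)\cdot\xi=\CC_1(\zeta\otimes n):(\xi\otimes n)$), your unknown $a$ is twice the paper's $\delta$, and both arguments reduce the transmission conditions to inverting this $2\times2$ matrix via strong convexity of $\CC_1$, then back-substitute to obtain the explicit formula for $\tilde{\MM}$.
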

{\it Proof.}
First of all let us notice that, by the first transmission condition
(\ref{transcondtang}) we can write
\begin{equation}
\label{jump_tang}
e^e\;=\; e^i + \delta \otimes n + n \otimes \delta, \qquad
\textrm{for some}\; \d \in \RR^2
\end{equation}
Let $q^{-1}$ denote the (symmetric) linear operator  associated to the
quadratic form on $\RR^2$:
\begin{eqnarray*}
(q^{-1} \zeta)\xi &=& (\CC_1 (\zeta \otimes n)): (\xi \otimes n),
\qquad \zeta, \xi \in \RR^2.
\end{eqnarray*}


Note that, since $\CC_1$ is positive definite, $q^{-1}$ is
invertible since, by (\ref{4}),
\begin{eqnarray*}
\forall\; \zeta\; \in \RR^2, \quad
(q^{-1} \zeta)\zeta &=& (\CC_1(\zeta \otimes n):(\zeta \otimes n)
\;\geq\; \lambda_0 |\zeta|^2,
\end{eqnarray*}
and thus its inverse $q$ is well defined.
Conditions~(\ref{transcondnorm}) and (\ref{jump_tang}) imply that
\begin{eqnarray}\label{asterisco}
\left((\CC_0-\CC_1)e^e)n\right)&=&\CC_1(e^i-e^e)n\\
&=&(\CC_1 (\delta \otimes n + n \otimes \delta))\, n
\end{eqnarray}
On the other hand, for any $\xi \in \RR^2$, we have
\begin{eqnarray*}
2 (q^{-1} \delta)\xi&=&2 (\CC_1 (\delta \otimes n)):(\xi \otimes n)\\
&=&(\CC_1(\delta \otimes n + n \otimes \delta)) :(\xi \otimes n)\\
&=&\CC_1(e^i-e^e):(\xi\otimes n).
\end{eqnarray*}
This can equivalently be written as
\[2 (q^{-1} \delta)\xi=\left(\CC_1(e^i-e^e)n\right)\cdot\xi,\quad\forall \xi \in \RR^2\]
that is, by (\ref{asterisco})
\[
     2 (q^{-1} \delta)=\CC_1(e^i-e^e)n=((\CC_0-\CC_1)\, e^e) n.
\]
From the above equation we deduce that
\[\delta=\frac{1}{2}q\left((\CC_0-\CC_1)e^e n\right)\]
hence, by (\ref{jump_tang})
\[e^i=e^e+\frac{1}{2}q\left((\CC_0-\CC_1)e^e n\right)\otimes n+n\otimes\frac{1}{2}q\left((\CC_0-\CC_1)e^e n\right).\]
We can now conclude that
\[(\CC_0-\CC_1)e^i=(\CC_0-\CC_1)e^e+(\CC_0-\CC_1)\left(q\left((\CC_0-\CC_1)e^e n\right)\otimes n\right),\]
where, in the last expression we used symmetry of tensor $\CC_0-\CC_1$.

Hence, the forth order tensor $\tilde{\MM}$ defined by
\begin{equation}\label{eq:Mdef}
\tilde{\MM}h=(\CC_0-\CC_1)h+(\CC_0-\CC_1)\left(q\left((\CC_0-\CC_1)h n\right)\otimes n\right),
\end{equation}
satisfies~(\ref{richiesta}).
As can be seen by its expression, $\tilde{\MM}$ does not depend on $e^e$ and $e^i$ but only on the elasticity tensors $\CC_0$ and $\CC_1$ and on the directions $n$ and $\tau$.
Moreover, this tensor $\MM$ can be defined for every point $x$ in $\omep^\prime$ and it is continuous with respect to $x$.

\begin{rem}

Assume that $\{\omep\}$ is a sequence of thin strip-like inclusions
as in~(\ref{thinomegaepdef}). Then,
the expansions~(\ref{9}) and (\ref{2.11}) coincide.
In other words, the measure $\MM(x) d\mu_x$ that appears in~(\ref{9})
is precisely $\tilde{\MM}(x) \delta_{\sigz}(x)$, where $\tilde{\MM}(x)$ is defined
in the section~\ref{ssec.poltensor}.

Indeed, given the form of the sets $\omep$, it is immediate to see that
\begin{eqnarray*}
\ds\frac{1}{|\omega_\ep|} \chi_{\omega_\ep}
&\rightharpoonup&
\delta_\sigz.
\end{eqnarray*}
Thus, recalling Lemma~\ref{lemma2}, it suffices to show that
\begin{eqnarray*}
\ds\frac{1}{|\omega_\ep|} \chi_{\omega_\ep}(\CC_1-\CC_0)\nabhat u_{\ep_n}
&\rightharpoonup&
\tilde{\MM}(x) \nabhat U(x) \delta_\sigz,
\end{eqnarray*}
in the weak$^*$ topology of $(C^0(\overline{\om}))^\prime$.
This again is a consequence of the uniform regularity estimates
on $\uep$.
Indeed, let $\phi \in C^0(\overline{\om})$. Using the same notations and the
same analysis as in section~\ref{new4}, we see that
\begin{eqnarray*}
\ds \int_{\omega_\ep} \ds\frac{1}{|\omega_\ep|} (\CC_1-\CC_0)\nabhat u_\ep \phi\,dx
&=&
\ds\int_{\omega_\ep^\prime} \ds\frac{1}{|\omega_\ep|} (\CC_1-\CC_0)\nabhat u_\ep \phi\,dx\\
&&+
\ds\int_{\omega_\ep \setminus \omega_\ep^\prime} \ds\frac{1}{|\omega_\ep|} (\CC_1-\CC_0)\nabhat u_\ep \phi\,dx
\\
&=&
2 \ds\frac{1}{|\sigma_\ep^\prime|}
\ds\int_{\sigma_\ep^\prime} (\CC_1-\CC_0)\nabhat u_\ep^i \phi \,d\sigma_x
+ O(\ep^{\inf(\alpha - \beta(1 + \alpha), \beta)})
\\
&=&
2  \ds\frac{1}{|\sigma_\ep^\prime|}
\ds\int_{\sigma_\ep^\prime} \tilde{\MM}(x)\nabhat u_\ep^e \phi\,d\sigma_x
+ O(\ep^{\inf(\alpha - \beta(1 + \alpha), \beta)})
\\
&\rightarrow&
2 \ds\int_{\sigz} \tilde{\MM}(x)\nabhat U \phi\,d\mu,
\end{eqnarray*}
which proves the claim.
\end{rem}


\begin{rem}
In the case of isotropic materials, an explicit formula for $\mathbb{M}$ can be obtained from \eqref{eq:Mdef}. In fact, in this case $\CC_0$ and $\CC_1$ have the simple form:
\begin{equation} \label{eq:Cisodef}
    \begin{aligned}
         \CC_0 &= \lambda_0 \, I_2\otimes I_2 + 2 \, \mu_0\, I_4, \\
         \CC_1 &= \lambda_1 \, I_2\otimes I_2 + 2 \, \mu_1\, I_4
    \end{aligned}
\end{equation}
where $\lambda_i$, $\mu_i$, $i=0,1$, are the Lam\'e parameters, $I_2$, $I_4$ are the identity elements on $2$ and $4$ symmetric tensors respectively. Hence, as linear maps over symmetric matrices, they are diagonal in any basis.
These tensors are strongly convex if $\lambda_i >0$ and $\lambda_i + \mu_i>0$, $i=0,1$.

Then, it readily follows that the matrix $q$ is given by (see \cite[Equation 4.8]{FrancfortMurat}):
\begin{equation} \label{eq:qdef}
    q = \frac{1}{\mu_1} I_2  - \frac{\lambda_1 + \mu_1}{\mu_1\, (\lambda_1+2 \mu_1)} n\otimes n.
\end{equation}
Combining \eqref{eq:Cisodef} with \eqref{eq:qdef} gives:
\[
   \begin{aligned}
    &\left(q\left((\CC_0-\CC_1)h n\right)\otimes n\right)=
     \frac{\lambda_0-\lambda_1}{\lambda_1 + 2\mu_1} \text{Tr}(h) \, n\otimes n \\
      &\qquad\qquad + \frac{ 2 \, (\mu_0-\mu_1)}{\mu_1} \left[ (h n) \otimes n- \frac{\lambda_1+\mu_1}{\lambda_1 + 2 \mu_1} \left((h n)\cdot n\right) n\otimes n\right]
   \end{aligned}
\]
Lengthy, but straightforward calculations yield:
\begin{equation} \label{eq:Misodef}
    \mathbb{M} h  = a\, \text{Tr}(h) \, I_2 + b\, h + c \, \left((h \tau)\cdot \tau\right) \tau\otimes
     \tau + d\, \left((h n)\cdot n\right) n\otimes n,
\end{equation}
where
\[
 \begin{aligned}
   a &= (\lambda_0-\lambda_1)\, \frac{\lambda_0 + 2\mu_0}{\lambda_1 + 2\mu_1},
   \qquad b =   (\mu_0-\mu_1)\, \frac{\mu_0}{\mu_1},\\
    c &=  (\mu_0-\mu_1)  \frac{2\lambda_1(\mu_1-\mu_0) + \mu_1(\lambda_1-\lambda_0) + 2\mu_1 (\mu_1-\mu_0)}{\mu_1(\lambda_1+2\mu_1)}, \\
    d& = 2\, (\mu_0-\mu_1) \frac{\mu_1\lambda_0 - \lambda_1 \mu_0}{\mu_1(\lambda_1+2\mu_1)}.
 \end{aligned}
\]
As expected, this formula agrees with that obtained in \cite[Theorem 2.1]{BerettaFrancini}.
\end{rem}


\appendix
%
%
%
%
%
%
\section{A Caccioppoli type inequality}
For the sake of completness, we state and prove a Caccioppoli type inequality for solutions of strongly convex systems.
\begin{teo}\label{Teo4}
Let $B_{\rho}$ and $B_{2\rho}$ be two concentric balls contained in  $\om$  and let  
$\bar u$ be any constant vector.

Let  $u\in H^1(\om)$ be   solution to
\[
\nabla \cdot\left(\CC \nabhat u\right)=f\quad\mbox{in}\quad \Omega,
\]
where $\CC\in L^{\infty}(\Omega)$ is a strongly convex tensor and $f\in H^{-1}(\Omega)$. Then
\begin{equation}\label{cacc}
\|\nabla u\|^2_{L^{2}(B_{\rho})}\leq \frac{C_1}{\rho^{2}}\|u-{\bar u}\|^2_{L^2(B_{2\rho})}+C_2\|f\|^2_{H^{-1}(B_{2\rho})}
\end{equation}
\end{teo}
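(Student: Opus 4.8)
The plan is to carry out the standard Caccioppoli argument with a quadratic cutoff, the one extra ingredient being Korn's inequality, which is forced on us because the coercivity of $\CC$ controls only the symmetrized gradient $\nabhat u$ and not the full gradient $\nabla u$ (no regularity of $\CC$ beyond boundedness and coercivity is needed). First I note that it suffices to treat $\bar u=0$: since $\nabhat$ annihilates constant vectors, $u-\bar u$ solves $\dive(\CC\nabhat(u-\bar u))=f$ as well, so after replacing $u$ by $u-\bar u$ I may assume $\bar u=0$. Then fix a cutoff $\eta\in C_c^\infty(B_{2\rho})$ with $\eta\equiv 1$ on $B_\rho$, $0\le\eta\le1$ and $|\nabla\eta|\le C\rho^{-1}$.

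The core step is to test the weak formulation — which, by the symmetry of $\CC$, reads $\int_\om \CC\nabhat u:\nabhat\varphi\,dx=-\langle f,\varphi\rangle$ for every $\varphi\in H^1_0(\om)$ — against $\varphi=\eta^2 u\in H^1_0(B_{2\rho})$. Expanding $\nabhat(\eta^2 u)=\eta^2\nabhat u+2\eta\,\widehat{\nabla\eta\otimes u}$ gives $\int\eta^2\,\CC\nabhat u:\nabhat u\,dx=-2\int\eta\,\CC\nabhat u:\widehat{\nabla\eta\otimes u}\,dx-\langle f,\eta^2 u\rangle$. On the left, strong convexity of $\CC$ furnishes the lower bound $\lambda_0\int\eta^2|\nabhat u|^2\,dx$. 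The first term on the right is at most $\|\CC\|_{L^\infty}\int\eta|\nabhat u|\,|\nabla\eta|\,|u|\,dx$, which Young's inequality absorbs into $\tfrac{\lambda_0}{4}\int\eta^2|\nabhat u|^2\,dx+C\rho^{-2}\|u\|^2_{L^2(B_{2\rho})}$. For the $f$-term I use $|\langle f,\eta^2 u\rangle|\le\|f\|_{H^{-1}(B_{2\rho})}\|\nabla(\eta^2 u)\|_{L^2(B_{2\rho})}$; here the full gradient appears, so I control it by applying Korn's first inequality $\|\nabla w\|^2_{L^2}\le 2\|\nabhat w\|^2_{L^2}$ to $w=\eta u\in H^1_0(B_{2\rho})$, obtaining $\|\nabla(\eta^2 u)\|_{L^2}\le C(\|\eta\,\nabhat u\|_{L^2}+\rho^{-1}\|u\|_{L^2(B_{2\rho})})$. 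One further application of Young's inequality absorbs the $\|\eta\,\nabhat u\|_{L^2}$ contribution into the left-hand side, and after rearranging I get $\int_{B_{2\rho}}\eta^2|\nabhat u|^2\,dx\le C\big(\rho^{-2}\|u\|^2_{L^2(B_{2\rho})}+\|f\|^2_{H^{-1}(B_{2\rho})}\big)$.

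To conclude, on $B_\rho$ one has $\nabla u=\nabla(\eta u)$, so $\|\nabla u\|^2_{L^2(B_\rho)}\le\|\nabla(\eta u)\|^2_{L^2(B_{2\rho})}\le 2\|\nabhat(\eta u)\|^2_{L^2}\le C\big(\|\eta\,\nabhat u\|^2_{L^2}+\rho^{-2}\|u\|^2_{L^2(B_{2\rho})}\big)$ by Korn's inequality once more; combining with the previous bound and restoring $u-\bar u$ yields \eqref{cacc}. The main (and essentially only) obstacle compared with the scalar Caccioppoli inequality is the repeated recourse to Korn's inequality: coercivity controls $\|\eta\,\nabhat u\|_{L^2}$ rather than $\|\eta\,\nabla u\|_{L^2}$, so Korn is needed both to estimate the test function $\eta^2 u$ in $H^1_0$ and to pass from $\|\eta\,\nabhat u\|_{L^2}$ back to $\|\nabla u\|_{L^2(B_\rho)}$; one must also keep all cutoffs supported inside $B_{2\rho}$ so that the localized $H^{-1}(B_{2\rho})$ norm of $f$ may legitimately be used. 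A scaling reduction to $\rho=1$ is an alternative, but the direct cutoff computation seems cleaner.
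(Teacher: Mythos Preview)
Your argument is correct and follows essentially the same strategy as the paper's proof: test the weak formulation against a quadratic cutoff times $u-\bar u$, use strong convexity to control the symmetrized gradient, and invoke Korn's inequality for $H^1_0$ functions to pass from $\nabhat$ to $\nabla$. The only organizational difference is that the paper works directly with the quadratic form $a_0(\psi,\psi)$ for $\psi=\theta(u-\bar u)$, observes the identity $a_0(\psi,\psi)=a_0(u,\theta\psi)+\int \CC(\nabla\theta\otimes(u-\bar u)):(\nabla\theta\otimes(u-\bar u))$, and applies Korn once to get $a_0(\psi,\psi)\ge\gamma\|\nabla\psi\|^2_{L^2}$; you instead first bound $\int\eta^2|\nabhat u|^2$ and then apply Korn a second time to $\eta u$ to recover $\|\nabla u\|_{L^2(B_\rho)}$. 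Both routes are standard and equivalent.
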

\begin{rem}Note that ${\bar u}= (u)_{B_{2\rho}}:=\frac{1}{|B_{2\rho}|}\int_{B_{2\rho}} u\,dx$ gives the minimum value for $\|u-{\bar u}\|^2_{L^2(B_{2\rho})}$.\end{rem}

{\it Proof of Theorem \ref{Teo4} }

Let $\theta\in C^{\infty}_0(B_{2\rho})$ such that
\[
0\leq\theta\leq 1, \quad \theta=1 \text{ in } B_\rho \text{ and } |\nabla\theta| \leq \frac{C}{\rho}.
\]

Let $v\in H^1(B_{2\rho}, \RR^2)$ and observe that
\[\nabla (\theta^2 v)= \theta \nabla (\theta v)+\theta\nabla\theta\otimes v.
\]

By assumption $ u$ is a weak solution of
\[
\nabla \cdot\left(\CC \nabhat u\right)=f\quad\mbox{in}\quad B_{2\rho}
\]
hence  we have
\[
a_0(u,\phi) =-<f,\phi> \,\,\forall  \phi \in H^1_0(B_{2\rho})
\]
where
\[
a_0(u,\phi)=\int_{B_{2\rho}}\CC\nabhat u :\nabhat \phi.
\]
Let
$\psi=\theta (u-\bar u)$ and let $\phi=\theta \psi$.
We have that
\begin{eqnarray*}
&&a_0(\psi,\psi)=
\int_{B_{2\rho}} \CC\nabhat \psi:\nabhat \psi \\&&=\int_{B_{2\rho}}[ \CC(\theta\nabhat (u-\bar u)+(u-\bar u)\otimes\nabla\theta):(\theta\nabhat (u-\bar u)+(u-\bar u)\otimes\nabla\theta)]
\end{eqnarray*}
After some straightforward calculations one gets
\[
a_0(\psi,\psi)=a_0(u,\phi)+\int_{B_{2\rho}}\CC\left(\nabla\theta\otimes(u-\bar u)\right):\left(\nabla\theta\otimes(u-\bar u)\right) =<f,\phi>+I
\]
We have
$$
|I|\leq C_3\|\nabla\theta\otimes(u-\bar u)\|^2_{L^2(B_{2\rho})}\leq \frac{C_3}{\rho^2}\|u-\bar u\|^2_{L^2(B_{2\rho})}
$$
and by Young inequality
$$
|<f,\phi>|\leq \|f\|_{H^{-1}(B_{2\rho})}\|\nabla\phi\|_{L^2(B_{2\rho})}\leq \delta \|\nabla\phi\|^2_{L^2(B_{2\rho})}+c(\delta)\|f\|^2_{H^{-1}(B_{2\rho})}
$$
So, we get
\[
a_0(\psi,\psi)-\delta\|\nabla\phi\|^2_{L^2(B_{2\rho})}\leq \frac{C_3}{\rho^2}\|u-\bar u\|^2_{L^2(B_{2\rho})}+c(\delta)\|f\|^2_{H^{-1}(B_{2\rho})}
\]
By the strong convexity of $\CC$ and by Korn inequality applied to the test function $\psi$, we have
\[
a_0(\psi,\psi)\geq \gamma \|\nabla\psi\|^2_{L^2(B_{2\rho})}
\]
which gives
\begin{eqnarray*}
 \frac{C_3}{\rho^2}\|u-\bar u\|^2_{L^2(B_{2\rho})}+c(\delta)\|f\|^2_{H^{-1}(B_{2\rho})}&\geq& \gamma \|\nabla\psi\|^2_{L^2(B_{2\rho})}-\delta \|\nabla\phi\|^2_{L^2(B_{2\rho})}\\&\geq& \gamma \|\nabla\psi\|^2_{L^2(B_{2\rho})}-\delta  \|\nabla(\theta\psi)\|^2_{L^2(B_{2\rho})}
 \end{eqnarray*}
Hence, since $\theta\leq 1$ and $|\nabla\theta| \leq \frac{C}{\rho}$,
\begin{eqnarray*}
&& \frac{C_3}{\rho^2}\|u-\bar u\|^2_{L^2(B_{2\rho})}+c(\delta)\|f\|^2_{H^{-1}(B_{2\rho})}\\
&&\geq  \gamma \|\nabla\psi\|^2_{L^2(B_{2\rho})}-2\delta
  \|\nabla\psi\|^2_{L^2(B_{2\rho})}-\frac{2\delta}{\rho^2} \| \psi\|^2_{L^2(B_{2\rho})}\\
  &&\geq\gamma \|\nabla\psi\|^2_{L^2(B_{2\rho})}-2\delta
  \|\nabla\psi\|^2_{L^2(B_{2\rho})}-\frac{2\delta}{\rho^2} \| u-\bar u\|^2_{L^2(B_{2\rho})}
 \end{eqnarray*}
Choosing $\delta$ small enough, using the fact that $\theta\leq 1$, that $|\nabla\theta| \leq \frac{C}{\rho}$ and the definition of $\psi$ we finally get
\[
 \|\nabla u\|^2_{L^2(B_{\rho})}= \|\nabla\psi\|^2_{L^2(B_{\rho})}\leq  \|\nabla\psi\|^2_{L^2(B_{2\rho})}\leq \frac{C_1}{\rho^{2}}\|u-\bar u\|^2_{L^2(B_{2\rho})}+C_2\|f\|^2_{H^{-1}(B_{2\rho})}.
 \]

\qed
\section{A Meyer's type result}
We state a generalization of Meyer's
theorem concerning the regularity of solutions to systems with
bounded coefficients. For $\eta>0$, define $H^{1,2+\eta}(\om)$ by
$$H^{1,2+\eta}(\om):=\bigg\{ u \in L^{2+\eta}(\om), \nabla u \in
L^{2+\eta}(\om) \bigg\}$$ and let $H^{-1,2+\eta}(\om)$ be its
dual. Introduce
$$H_{\mbox{loc}}^{1,2+\eta}(\om):=\bigg\{ u \in H^{1,2+\eta}(K), \forall K \subset \subset \Omega \bigg\}.$$
\begin{teo}\label{Teo3}
There exists $\eta>0$ such that if $u\in H^1(\om) $ is  solution to
$$
\nabla \cdot\left(\CC \nabhat u\right)= f\quad\mbox{in}\quad\om,\\
$$
where $\CC\in L^{\infty}(\om)$ is a strongly convex tensor and
$f\in H^{-1,2+\eta}(\om)$ then $u\in
H_{\mbox{loc}}^{1,2+\eta}(\om)$ and given
$B_{\rho}$ and $B_{2\rho}$ concentric balls contained in $\om$,
\begin{equation}
\|\nabla u\|_{L^{2+\eta}(B_{\rho})}\leq C(\|f\|_{H^{-1,2+\eta}(B_{2\rho})}+\rho^{\frac{2}{2+\eta}-1}\|\nabla u\|_{L^2(B_{2\rho})}).
\end{equation}
\end{teo}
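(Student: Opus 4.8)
The strategy is the classical one: the higher integrability of $\nabla u$ is the self-improving consequence, via Gehring's lemma, of the Caccioppoli inequality of Theorem~\ref{Teo4} combined with the Sobolev--Poincar\'e inequality. It is convenient to put the datum in divergence form first: for a ball $B_{2\rho}\subset\om$ one represents $f=\dive F$ with $F\in L^{2}(B_{2\rho})$, so that $\|f\|_{H^{-1}(B_{2\rho})}\le C\|F\|_{L^2(B_{2\rho})}$ with $C$ independent of $\rho$ for $\rho$ bounded; moreover, if $f\in H^{-1,s}(B_{2\rho})$ for some $s>2$, then $F$ can be chosen in $L^{s}(B_{2\rho})$ with $\|F\|_{L^{s}(B_{2\rho})}\le C\|f\|_{H^{-1,s}(B_{2\rho})}$. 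The exponent $\eta$ in the statement will be produced by Gehring's lemma and will depend only on $d$, $\lambda_0$ and $\|\CC\|_{L^\infty}$; the hypothesis $f\in H^{-1,2+\eta}$ is then invoked with that value of $\eta$, so that $F\in L^{2+\eta}_{\mathrm{loc}}$.

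Next I would derive a reverse H\"older inequality. Fix concentric balls $B_r\subset B_{2r}\subset\om$ and set $\bar u=(u)_{B_{2r}}$. By Theorem~\ref{Teo4}, i.e.\ \eqref{cacc}, one has $\|\nabla u\|_{L^2(B_r)}^2\le C r^{-2}\|u-\bar u\|_{L^2(B_{2r})}^2+C\|F\|_{L^2(B_{2r})}^2$, while Sobolev--Poincar\'e gives $\|u-\bar u\|_{L^2(B_{2r})}\le C\,\|\nabla u\|_{L^q(B_{2r})}$, where one may take $q=2d/(d+2)$ in general and, in the planar case $d=2$ that is relevant here, simply $q=1$. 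Dividing by $|B_r|$, these combine into
\[
\frac{1}{|B_r|}\int_{B_r}|\nabla u|^2\,dx\ \le\ C\Big(\frac{1}{|B_{2r}|}\int_{B_{2r}}|\nabla u|^q\,dx\Big)^{2/q}+C\,\frac{1}{|B_{2r}|}\int_{B_{2r}}|F|^2\,dx,
\]
valid for every pair of concentric balls $B_r\subset B_{2r}\subset\om$, with $q<2$ and $F\in L^{2+\eta}_{\mathrm{loc}}$.

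I would then apply Gehring's lemma in quantitative form (as in, e.g., Giaquinta's monograph): from a reverse H\"older inequality of the above type, with exponent $q<2$ and forcing potential $F\in L^{2+\eta}$, there exist $\eta>0$ and $C>0$, depending only on $d$, $\lambda_0$ and $\|\CC\|_{L^\infty}$, such that $\nabla u\in L^{2+\eta}_{\mathrm{loc}}(\om)$ and, for concentric balls $B_\rho\subset B_{2\rho}\subset\om$,
\[
\Big(\frac{1}{|B_\rho|}\int_{B_\rho}|\nabla u|^{2+\eta}\Big)^{\frac{1}{2+\eta}}\le C\Big(\frac{1}{|B_{2\rho}|}\int_{B_{2\rho}}|\nabla u|^2\Big)^{\frac12}+C\Big(\frac{1}{|B_{2\rho}|}\int_{B_{2\rho}}|F|^{2+\eta}\Big)^{\frac{1}{2+\eta}}.
\]
Multiplying through by $|B_\rho|^{1/(2+\eta)}\sim\rho^{d/(2+\eta)}$ turns the averages into norms: the first term on the right acquires the factor $\rho^{d/(2+\eta)-d/2}$ (which in $d=2$ is exactly $\rho^{2/(2+\eta)-1}$, as in the statement), and, taking a near-optimal potential $F$, the last term becomes $C\|f\|_{H^{-1,2+\eta}(B_{2\rho})}$, so one recovers precisely the asserted estimate.

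The only delicate point is the bookkeeping on the right-hand side: one must verify that the constant relating $\|f\|_{H^{-1,2+\eta}(B_{2\rho})}$ to the smallest $L^{2+\eta}$-norm of a divergence potential $F$, as well as the constants in Caccioppoli and in Sobolev--Poincar\'e, are scale invariant, so that the reverse H\"older inequality holds with one and the same constant over all balls, as Gehring's lemma requires. The constant in Theorem~\ref{Teo4} is indeed scale invariant, because Korn's inequality is applied there to the compactly supported test function $\theta(u-\bar u)$; beyond this, the argument uses no elliptic regularity other than Theorem~\ref{Teo4} itself, and no structure of $\CC$ beyond boundedness and strong convexity.
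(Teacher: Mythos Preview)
Your proposal is correct and follows exactly the route the paper has in mind: the paper does not give a self-contained proof but simply refers to Campanato's monograph, noting that the argument there carries over verbatim once the Caccioppoli inequality of Theorem~\ref{Teo4} is available for strongly convex (rather than merely strongly elliptic) tensors. The classical machinery you outline---Caccioppoli plus Sobolev--Poincar\'e yielding a reverse H\"older inequality, then Gehring's lemma---is precisely what Campanato does in \cite[Chapter~II, \S10]{Campanato}, so your sketch fills in what the paper leaves implicit.
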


Theorem \ref{Teo3} has been proved by Campanato in \cite{Campanato} in the case
of strongly elliptic systems. From the proof of this result in
\cite[Chapter II, section 10]{Campanato}, it is clear that the result can be extended to more general systems provided the Caccioppoli type inequality (Theorem \ref{Teo4}) holds.

\bibliographystyle{plain}

\end{document}